  \newtheorem{proposition}{Proposition}[section]
  \newtheorem{lemma}[proposition]{Lemma}
  \newtheorem{theorem}[proposition]{Theorem}
  \theoremstyle{definition}
  \newtheorem{definition}[proposition]{Definition}
  \newtheorem{example}[proposition]{Example}
  \theoremstyle{remark}
  \newtheorem{remark}[proposition]{Remark}
\begin{document}

\title{Covariant Differential Calculus Over Monoidal Hom-Hopf Algebras}
\author{SERKAN KARA\c{C}UHA}
\address{Department of Mathematics, FCUP, University of Porto, Rua Campo Alegre
687, 4169-007 Porto, Portugal}
\email{s.karacuha34@gmail.com, karacuha@itu.edu.tr}
\keywords{covariant Hom-first order differential calculus, universal Hom-differential calculus, quantum Hom-tangent space, quantum Hom-Lie algebra}

\begin{abstract}
 Concepts of first order differential calculus (FODC) on a monoidal Hom-algebra and left-covariant FODC over a left Hom-quantum space with respect to a monoidal Hom-Hopf algebra are presented. Then, extension of the universal FODC over a monoidal Hom-algebra to a universal Hom-differential calculus is described. Next, concepts of left(right)-covariant and bicovariant FODC over a monoidal Hom-Hopf algebra are studied in detail. Subsequently, notion of quantum Hom-tangent space associated to a bicovariant Hom-FODC is introduced and equipped with an analogue of Lie bracket (commutator) through Woronowicz' braiding. Finally, it is proven that this commutator satisfies quantum versions of the antisymmetry relation and Hom-Jacobi identity.
\end{abstract}

\maketitle
\section{Introduction}
Hom-type algebras were first introduced in the form of Hom-Lie algebras in \cite{HartwigLarssonSilvestrov}, where the Jacobi identity is replaced by the so-called Hom-Jacobi identity via a linear endomorphism. In 2008, Hom-associative algebras were suggested in \cite{MakhloufSilvestrov} to induce a Hom-Lie algebra using the commutator bracket. Other Hom-type structures such as Hom-coalgebras, Hom-bialgebras, Hom-Hopf algebras and their properties were further investigated and developed in \cite{AmmarMakhlouf,DekkarMakhlouf,Gohr,MakhloufSilvestrov1,MakhloufSilvestrov2,MakhloufPanaite,Yau,Yau1,Yau2,Yau5}. In \cite{CaenepeelGoyvaerts}, the authors studied the Hom-bialgebras and Hom-Hopf algebras in the context of tensor categories, and these objects are featured as {\it monoidal}. One can find further research on monoidal Hom-Hopf algebras and structures on them such as Hom-Yetter-Drinfeld modules and covariant Hom-bimodules in \cite{ChenWangZhang,ChenZhang,GuoChen,Karacuha1,LiuShen,WangGuo}.

The general theory of covariant differential calculi on quantum groups was introduced by S. L. Woronowicz in \cite{Woronowicz}, \cite{Woronowicz0},\cite{Woronowicz1}. Many results obtained in this paper in the Hom-setting follow from the classical results appear in the fundamental reference \cite{Woronowicz1}. In Section 2, after the notions of first order differential calculus (FODC) on a monoidal Hom-algebra and left-covariant FODC over a left Hom-quantum space with respect to a monoidal Hom-Hopf algebra are presented, the left-covariance of a Hom-FODC is characterized. Then, in Section 3, extension of the universal FODC over a monoidal Hom-algebra to a universal Hom-differential calculus (Hom-DC) is described as well (for the classical case, that is, for the extension of a FODC over an algebra $A$ to the differential envelope of $A$ one should refer to \cite{CuntzQuillen}, \cite{Connes1}). Thereafter, in Section 4 and Section 5, the concepts of left-covariant and bicovariant FODC over a monoidal Hom-Hopf algebra $(H,\alpha)$ are studied in detail. A subobject $\mathcal{R}$ of $ker\varepsilon$, which is a right Hom-ideal of $(H,\alpha)$, and a quantum Hom-tangent space are associated to each left-covariant $(H,\alpha)$-Hom-FODC: It is indicated that left-covariant Hom-FODCs are in one-to one correspondence with these right Hom-ideals $\mathcal{R}$, and that the quantum Hom-tangent space and the left coinvariant of the monoidal Hom-Hopf algebra on Hom-FODC form a nondegenerate dual pair. The quantum Hom-tangent space associated to a bicovariant Hom-FODC is equipped with an analogue of Lie bracket (or commutator) through Woronowicz' braiding and it is proven that this commutator satisfies quantum versions of the antisymmetry relation and Hom-Jacobi identity, which is therefore called the quantum Hom-Lie algebra of that bicovariant Hom-FODC. Throughout, we work with vector spaces over a field $k$.

\section{Left-Covariant FODC over Hom-quantum spaces}

\begin{definition} Let $(B,\beta)$ be a monoidal Hom-bialgebra. A {\it right} $(B,\beta)$-{\it Hom-comodule algebra} (or {\it Hom-quantum space}) $(A,\alpha)$ is a monoidal Hom-algebra and a right $(B,\beta)$-Hom-comodule with a Hom-coaction $\rho^{A}:A\to A\otimes B,\: a\mapsto a_{(0)}\otimes a_{(1)}$ such that $\rho^{A}$ is a Hom-algebra morphism, i.e., for any $a,a' \in A$
\begin{equation*}\label{Hom-comodule-algebra-cond}(aa')_{(0)}\otimes (aa')_{(1)}=a_{(0)}a'_{(0)}\otimes a_{(1)}{a'}_{(1)},\:\: \rho^{A}(1_A)=1_A\otimes 1_B.\end{equation*}
\end{definition}

\begin{definition}A first order differential calculus over a monoidal Hom-algebra $(A,\alpha)$ is an $(A,\alpha)$-Hom-bimodule $(\Gamma,\gamma)$ with a linear map $d: A\to \Gamma$ such that
\begin{enumerate}
\item $d$ satisfies the Leibniz rule, i.e., $d(ab)=a\cdot db+ da\cdot b, \forall a,b\in A$,
\item $d\circ \alpha=\gamma\circ d$, which means that $d$ is in $\widetilde{\mathcal{H}}(\mathcal{M}_k)$,
\item $\Gamma$ is linearly spanned by the elements of the form $(a\cdot db)\cdot c$ with $a,b,c \in A$.
\end{enumerate}
\end{definition}
We call $(\Gamma,\gamma)$ an $(A,\alpha)$-Hom-FODC for short.
\begin{remark}
\begin{enumerate}
\item In the above definition, the second condition, i.e. $d\circ \alpha=\gamma\circ d$, is equivalent to the equality $d(1)=0$.
\item By the compatibility condition for Hom-bimodule structure of $(\Gamma,\gamma)$, we have $(a\cdot db)\cdot c=\alpha(a)\cdot(db\cdot \alpha^{-1}(c))$, which implies that $\Gamma$ is also linearly spanned by the elements $a\cdot(db\cdot c)$ for all $a,b,c\in A$. Thus we denote $\Gamma=(A\cdot dA)\cdot A=A\cdot(dA\cdot A)$.
\item   By using the Leibniz rule and the fact that $d(\alpha(a))=\gamma(da)$ for any $a\in A$, we get
\begin{eqnarray*}(a\cdot db)\cdot c &=&(d(ab)-da\cdot b)\cdot c=d(ab)\cdot c- (da\cdot b)\cdot c\\
&=&d(ab)\cdot c- \gamma(d(a))\cdot (b\alpha^{-1}(c))=d(ab)\cdot c- d(\alpha(a))\cdot (b\alpha^{-1}(c)),
\end{eqnarray*}
and
\begin{eqnarray*}\alpha(a)\cdot(db\cdot \alpha^{-1}(c)) &=&\alpha(a)\cdot(d(b\alpha^{-1}(c))-b\cdot d(\alpha^{-1}(c)))\\
&=&\alpha(a)\cdot d(b\alpha^{-1}(c))-\alpha(a)\cdot(b\cdot d(\alpha^{-1}(c)))\\
&=&\alpha(a)\cdot d(b\alpha^{-1}(c))-(ab)\cdot d(c).
\end{eqnarray*}
Hence, $\Gamma=A\cdot dA=dA\cdot A$.
\end{enumerate}
\end{remark}
\begin{definition}\label{left-covariant-Hom-FODC}Let $(H,\beta)$ be a monoidal Hom-bialgebra and $(A,\alpha)$ be a left Hom-quantum space for $(H,\beta)$ (i.e. a left $(H,\beta)$-Hom-comodule algebra) with the left Hom-coaction $\varphi: A\to H\otimes A,\: a\mapsto a_{(-1)}\otimes a_{(0)}$. An $(A,\alpha)$-Hom-FODC $(\Gamma,\gamma)$ is called {\it left-covariant} with respect to $(H,\beta)$ if there is a left Hom-coaction $\phi:\Gamma\to H\otimes\Gamma,\: \omega\mapsto\omega_{(-1)}\otimes \omega_{(0)}$ of $(H,\beta)$ on $(\Gamma,\gamma)$ such that
\begin{enumerate}
\item $\phi(\alpha(a)\cdot(\omega\cdot b))=\varphi(\alpha(a))(\phi(\omega)\varphi(b))$, $\forall a,b \in A$, $\omega\in \Gamma$,
\item $\phi(da)=(id\otimes d)\varphi(a) $, $\forall a\in A$
\end{enumerate}
\end{definition}
 Condition $(1)$ can equivalently be written as $\phi((a\cdot\omega)\cdot \alpha(b))=(\varphi(a)\phi(\omega))\varphi(\alpha(b))$ by using the Hom-bimodule compatibility conditions for $(\Gamma,\gamma)$ and $(H\otimes \Gamma,\beta\otimes\gamma)$, where left and right $(H\otimes A,\beta\otimes\alpha)$-Hom-module structures of $(H\otimes \Gamma,\beta\otimes\gamma)$ are respectively given by
 $$(h\otimes a)(h'\otimes \omega)=hh'\otimes a\cdot \omega,$$
 $$(h'\otimes \omega)(h\otimes a)=hh'\otimes \omega\cdot a$$
for $h,h\in H$, $a\in A$ and $\omega \in \Gamma$. Condition $(2)$ means that $d:A\to \Gamma$ is left $(H,\beta)$-colinear, since the equality $d\circ\alpha=\gamma\circ d$ holds too.

One can see that for a given $(A,\alpha)$-Hom-FODC $(\Gamma,\gamma)$ there exists at most one morphism $\phi:\Gamma\to H\otimes\Gamma$ in $\widetilde{\mathcal{H}}(\mathcal{M}_k)$ which makes $(\Gamma,\gamma)$ left-covariant: Indeed, if there is one such $\phi$, then by the conditions $(1)$ and $(2)$ in Definition \ref{left-covariant-Hom-FODC} we do the following computation

\begin{eqnarray*}\phi\left(\sum_ia_i\cdot db_i\right)&=&\sum_i\phi(\gamma^{-1}(a_i\cdot db_i)\cdot 1_A)=\sum_i\phi((\alpha^{-1}(a_i)\cdot \gamma^{-1}(db_i))\cdot 1_A)\\
&=&\sum_i(\varphi(\alpha^{-1}(a_i))\phi(\gamma^{-1}(db_i)))\varphi(1_A)\\
&=&\sum_i([(\beta^{-1}\otimes\alpha^{-1})(\varphi(a_i))][(\beta^{-1}\otimes\gamma^{-1})(\phi(db_i))])(1_H\otimes1_A)\\
&=&\sum_i[(\beta^{-1}\otimes \gamma^{-1})(\varphi(a_i)\phi(db_i))](1_H\otimes 1_A)\\
&=&\sum_i\varphi(a_i)\phi(db_i)=\sum_i\varphi(a_i)(id\otimes d)(\varphi(b_i)),
\end{eqnarray*}
showing that $\varphi$ and $d$ describe $\phi$ uniquely.

\begin{proposition}\label{left-covariance-of-a-Hom-FODC}Let $(\Gamma,\gamma)$ be an $(A,\alpha)$-Hom-FODC. Then the following statements are equivalent:
\begin{enumerate}
\item $(\Gamma,\gamma)$ is left-covariant.
\item There is a morphism $\phi:\Gamma \to H\otimes \Gamma$ in $\widetilde{\mathcal{H}}(\mathcal{M}_k)$ such that $\phi(a\cdot db)=\varphi(a)(id\otimes d)(\varphi(b))$ for all $a,b \in A$.
\item $\sum_ia_i\cdot db_i=0$ in $\Gamma$ implies that $\sum_{i}\varphi(a_i)(id\otimes d)(\varphi(b_i))=0$ in $H\otimes \Gamma$.
\end{enumerate}
\end{proposition}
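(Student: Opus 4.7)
The plan is to establish the cycle $(1)\Rightarrow(2)\Rightarrow(3)\Rightarrow(1)$. The first two implications are essentially formal. For $(1)\Rightarrow(2)$, the uniqueness computation displayed immediately before the proposition already derives, under the hypothesis of left-covariance, the identity $\phi(a\cdot db)=\varphi(a)(id\otimes d)(\varphi(b))$; the very coaction witnessing (1) therefore witnesses (2). For $(2)\Rightarrow(3)$, any linear map sends $0$ to $0$, so applying the $\phi$ provided by (2) to the assumed relation $\sum_i a_i\cdot db_i=0$ and invoking its defining formula yields the conclusion of (3).

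The substantive implication is $(3)\Rightarrow(1)$. I would define $\phi:\Gamma\to H\otimes\Gamma$ on the generating set $\Gamma=A\cdot dA$ by
\[
\phi\Bigl(\sum_i a_i\cdot db_i\Bigr):=\sum_i\varphi(a_i)(id\otimes d)(\varphi(b_i)),
\]
and note that hypothesis (3) is precisely the statement that this prescription is independent of the chosen representative, so $\phi$ is a well-defined $k$-linear map satisfying the formula in (2) by construction.

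It then remains to verify that this $\phi$ is a left Hom-coaction making $(\Gamma,\gamma)$ left-covariant in the sense of Definition \ref{left-covariant-Hom-FODC}. Specifically I would check: (a) compatibility with $\gamma$, namely $\phi\circ\gamma=(\beta\otimes\gamma)\circ\phi$, using $d\circ\alpha=\gamma\circ d$, the relation $\varphi\circ\alpha=(\beta\otimes\alpha)\circ\varphi$ coming from $\varphi\in\widetilde{\mathcal{H}}(\mathcal{M}_k)$, and the Hom-module compatibility $\gamma(a\cdot\omega)=\alpha(a)\cdot\gamma(\omega)$; (b) condition (2) of Definition \ref{left-covariant-Hom-FODC}, by writing $da=1_A\cdot d(\alpha^{-1}(a))$, applying the defining formula, and using $\varphi(1_A)=1_H\otimes 1_A$ together with the unit law of the Hom-module $(H\otimes\Gamma,\beta\otimes\gamma)$; (c) condition (1) of Definition \ref{left-covariant-Hom-FODC}, by expanding $\alpha(a)\cdot((c\cdot dc')\cdot b)$ through the Hom-bimodule associators from Remark 2.3, rewriting the result as combinations of generators via the Leibniz rule, and matching both sides using the Hom-algebra morphism property $\varphi(xy)=\varphi(x)\varphi(y)$; (d) coassociativity and counitality of $\phi$, inherited from those of $\varphi$ together with the Leibniz rule and $d(1_A)=0$.

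The hard step is (c): the monoidal Hom-bimodule associator introduces twists by $\alpha$, $\beta$, and $\gamma$, so each rebracketing of $\alpha(a)\cdot(\omega\cdot b)$ has to be tracked carefully before multiplicativity of $\varphi$ can be invoked. The fact that $\varphi$, $d$, and the constructed $\phi$ all lie in $\widetilde{\mathcal{H}}(\mathcal{M}_k)$ is what allows these twists to commute past the structural maps on each tensor factor so that the bookkeeping closes.
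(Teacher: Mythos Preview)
Your proposal is correct and follows essentially the same approach as the paper: the trivial implications $(1)\Rightarrow(2)\Rightarrow(3)$, then for $(3)\Rightarrow(1)$ defining $\phi$ by the formula, using (3) for well-definedness, and verifying in turn the Hom-coaction axioms, the compatibility with $\gamma$, condition (2) of Definition~\ref{left-covariant-Hom-FODC} via $da=1_A\cdot d(\alpha^{-1}(a))$, and condition (1) by expanding $\alpha(a)\cdot(\omega\cdot b)$ through the Leibniz rule and Hom-associativity. The paper carries out exactly these computations explicitly, with the longest one being the bimodule compatibility you flagged as the hard step.
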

\begin{proof}$(1)\Rightarrow (2)$ and $(2)\Rightarrow (3)$ are trivial.

$(3)\Rightarrow (1):$ Let $\phi:\Gamma\to H\otimes\Gamma$ be defined by the equation

$$\phi(\sum_ia_i\cdot db_i)=\sum_i\varphi(a_i)(id\otimes d)(\varphi(b_i))$$

as was obtained in the above computation. By using hypothesis $(3)$ it is immediate to see that $\phi$ is well-defined. If we write $\varphi(a)=a_{(-1)}\otimes a_{(0)}$ for any $a\in A$ and $\phi(\omega)=\omega_{(-1)}\otimes \omega_{(0)}$ for all $\omega\in \Gamma$, then for $\omega=\sum_{i}a_{i}\cdot db_{i} \in \Gamma$ we have
$$\phi(\omega)=\omega_{(-1)}\otimes \omega_{(0)}=\sum_{i}a_{i,(-1)}b_{i,(-1)}\otimes a_{i,(0)}\cdot db_{i,(0)},$$
where we have used the notation $\varphi(a_{i})=a_{i,(-1)}\otimes a_{i,(0)}$. Now we prove that $\phi$ is a left Hom-coaction of $(H,\beta)$ on $(\Gamma,\gamma)$:

\begin{eqnarray*}\beta^{-1}(\omega_{(-1)})\otimes \phi(\omega_{(0)})&=&\sum_{i}\beta^{-1}(a_{i,(-1)}b_{i,(-1)})\otimes \phi(a_{i,(0)}\cdot db_{i,(0)})\\ \\
&=&\sum_{i}\beta^{-1}(a_{i,(-1)})\beta^{-1}(b_{i,(-1)})\otimes a_{i,(0)(-1)}b_{i,(0)(-1)}\otimes a_{i,(0)(0)}\cdot db_{i,(0)(0)}\\ \\
&=&\sum_{i}a_{i,(-1)1}b_{i,(-1)1}\otimes a_{i,(-1)2}b_{i,(-1)2}\otimes \alpha^{-1}(a_{i,(0)})\cdot d(\alpha^{-1}(b_{i,(0)}))\\ \\
&=&\sum_{i}(a_{i,(-1)}b_{i,(-1)})_{1}\otimes (a_{i,(-1)}b_{i,(-1)})_{2}\otimes \gamma^{-1}(a_{i,(0)}\cdot db_{i,(0)})\\ \\
&=&\Delta(\omega_{(-1)})\otimes \gamma^{-1}(\omega_{(0)}),
\end{eqnarray*}

\begin{eqnarray*}\varepsilon(\omega_{(-1)})\omega_{(0)}&=&\sum_{i}\varepsilon(a_{i,(-1)}b_{i,(-1)})a_{i,(0)}\cdot db_{i,(0)}\\ \\
&=&\sum_{i}\varepsilon(a_{i,(-1)})a_{i,(0)}\cdot d(\varepsilon(b_{i,(-1)})b_{i,(0)})\\ \\
&=&\sum_{i}\alpha^{-1}(a_i)\cdot d(\alpha^{-1}(b_i))=\gamma^{-1}(\omega),
\end{eqnarray*}
\begin{eqnarray*}\phi(\gamma(\sum_ia_i\cdot db_i))&=&\phi(\sum_{i}\alpha(a_i)\cdot d(\alpha(b_i)))\\ \\
&=&\sum_i\varphi(\alpha(a_i))(id\otimes d)(\varphi(\alpha(b_i)))\\ \\
&=&(\beta\otimes \alpha)(\varphi(a_i))(id\otimes d)((\beta\otimes \alpha)(\varphi(b_i)))\\ \\
&=&\sum_i\beta(a_{i,(-1)})\beta(b_{i,(-1)})\otimes \alpha(a_{i,(0)})\cdot d(\alpha(b_{i,(0)}))\\ \\
&=&(\beta\otimes \gamma)(\phi(\sum_ia_i\cdot db_i)).
\end{eqnarray*}

Let $\omega=\sum_ia_i\cdot db_i \in \Gamma$, and $a,b \in A$. Then we have
\begin{eqnarray*}\lefteqn{\phi(\alpha(a)\cdot(\omega\cdot b))}\hspace{2em}\\
&=&\phi(\alpha(a)\cdot(\sum_i(a_i\cdot db_i)\cdot b))\\
&=&\phi(\alpha(a)\cdot(\sum_i(\alpha(a_i)\cdot d(b_i \alpha^{-1}(b))-(a_ib_i)\cdot db)))\\
&=&\phi(\sum_i[(a\alpha(a_i))\cdot d(\alpha(b_i)b)-(a(a_ib_i))\cdot d(\alpha(b))])\\
&=&\sum_i\varphi(a\alpha(a_i))(id\otimes d)(\varphi(\alpha(b_i)b))-\sum_i\varphi(a(a_ib_i))(id\otimes d)(\varphi(\alpha(b)))\\
&=&\sum_i(\varphi(a)\varphi(\alpha(a_i)))(id\otimes d)(\varphi(\alpha(b_i)b))-\sum_i(\varphi(a)\varphi(a_ib_i))(id\otimes d)(\varphi(\alpha(b)))\\
&=&\sum_i\varphi(\alpha(a))(\varphi(\alpha(a_i))(id\otimes d)(\varphi(b_i\alpha^{-1}(b))))-\sum_i\varphi(\alpha(a))(\varphi(a_ib_i)(id\otimes d)(\varphi(b)))\\
&=&\varphi(\alpha(a))(\sum_i\varphi(\alpha(a_i))(id\otimes d)(\varphi(b_i\alpha^{-1}(b)))-\sum_i\varphi(a_ib_i)(id\otimes d)(\varphi(b)))\\
&=&\varphi(\alpha(a))(\sum_i\varphi(\alpha(a_i))(id\otimes d)(\varphi(b_i\alpha^{-1}(b)))-\sum_i(\varphi(a_i)\varphi(b_i))(id\otimes d)(\varphi(b)))\\
&=&\varphi(\alpha(a))(\sum_i\varphi(\alpha(a_i))[(id\otimes d)(\varphi(b_i\alpha^{-1}(b)))-\varphi(b_i)(id\otimes d)(\varphi(\alpha^{-1}(b)))])\\
&=&\varphi(\alpha(a))(\sum_i\varphi(\alpha(a_i))[((id\otimes d)(\varphi(b_i)))\varphi(\alpha^{-1}(b))])\\
&=&\varphi(\alpha(a))([\sum_i\varphi(a_i)(id\otimes d)(\varphi(b_i))]\varphi(b))\\
&=&\varphi(\alpha(a))(\phi(\omega)\varphi(b)),
\end{eqnarray*}
which is the first condition of Definition \ref{left-covariant-Hom-FODC}.
For any $a\in A$, we get
\begin{eqnarray*}\phi(da)&=&\phi(1_A\cdot\gamma^{-1}(da))=\phi(1_A\cdot d(\alpha^{-1}(a)))\\
&=&\varphi(1_A)(id\otimes d)(\varphi(\alpha^{-1}(a)))\\
&=&(1_H\otimes 1_A)[((id\otimes d)\circ(\beta^{-1}\otimes\alpha^{-1}))(\varphi(a))]\\
&=&(1_H\otimes 1_A)[((\beta^{-1}\otimes\gamma^{-1})\circ(id\otimes d))(\varphi(a))]=(id\otimes d)(\varphi(a)),
\end{eqnarray*}
which is the second condition of Definition \ref{left-covariant-Hom-FODC}.
\end{proof}

\section{Universal Differential Calculus of a Monoidal Hom-Algebra}
In the theory of quantum groups, a differential calculus is a substitute of the de Rham complex of a smooth manifold for arbitrary algebras. In this section, the definition of differential calculus over a monoidal Hom-algebra (abbreviated, Hom-DC) is given and the construction of the universal differential calculus of a monoidal Hom-algebra (universal Hom-DC) is outlined.
\begin{definition}
A {\it graded} monoidal Hom-algebra is a monoidal Hom-algebra $(A,\alpha)$ together with subobjects $A_n, n\geq 0$ (that is, for each $k$-submodule $A_n\subseteq A$, $(A_n,\alpha|_{A_n})\in \widetilde{\mathcal{H}}(\mathcal{M}_k) $) such that
    $$A=\bigoplus_{n\geq 0}A_n,$$
$1\in A_0$, and $A_nA_m\subseteq A_{n+m}$ for all $n,m\geq 0$.
\end{definition}

\begin{definition}
 A differential calculus over a monoidal Hom-algebra $(A,\alpha)$ is a graded monoidal Hom-algebra $(\Gamma=\bigoplus_{n\geq 0}\Gamma^n, \gamma)$ with a linear map $d:\Gamma\to \Gamma$, in $\widetilde{\mathcal{H}}(\mathcal{M}_k)$, of degree one (i.e., $d:\Gamma^n\to \Gamma^{n+1}$) such that
    \begin{enumerate}
    \item $d^2=0$,
    \item $d(\omega\omega')=d(\omega)\omega'+(-1)^n\omega d(\omega')$ for $\omega\in \Gamma^n,\omega'\in \Gamma$ (graded Leibniz rule),
    \item $\Gamma^0=A$, $\gamma|_{\Gamma^0}=\alpha$, and $\Gamma^n$ is a linear span of the elements of the form

    $a_0(da_1(\cdots(da_{n-1}da_n)\cdots))$ with $a_0,\cdots, a_n \in A$, $n\geq 0$.
    \end{enumerate}
A {\it differential Hom-ideal} of $(\Gamma,\gamma)$ is a Hom-ideal $\mathcal{I}$ of the monoidal Hom-algebra $(\Gamma,\gamma)$ (that is, $\mathcal{I}$ is a subobject of $(\Gamma,\gamma)$ such that $(\Gamma\mathcal{I})\Gamma=\Gamma(\mathcal{I}\Gamma)\subset\mathcal{I}$) such that $\mathcal{I}\cap \Gamma^0=\{0\}$ and $\mathcal{I}$ is invariant under the differentiation $d$.
\end{definition}
Let us write $\gamma^n$ for $\gamma|_{\Gamma^n}$ for all $n\geq 0$. Then, the map $d \in \widetilde{\mathcal{H}}(\mathcal{M}_k)$ means that $d\circ \gamma^n=\gamma^{n+1}\circ d$ for all $ n\geq 0$. Let $\mathcal{I}$ be a differential Hom-ideal of a $(A,\alpha)$-Hom-DC $(\Gamma,\gamma)$. Then, $\gamma$ induces an automorphism $\bar{\gamma}$ of $\bar{\Gamma}:=\Gamma/\mathcal{I}$ and $(\bar{\Gamma},\bar{\gamma})$ is a monoidal Hom-algebra. Since the condition $\mathcal{I}\cap \Gamma^0=\{0\}$ holds, $\bar{\Gamma}^0=\Gamma^0=A$. On the other hand, let $\pi:\Gamma\to \bar{\Gamma}$ be the canonical surjective map and define $\bar{d}:\bar{\Gamma}\to \bar{\Gamma}$ by $\bar{d}(\pi(\omega)):=\pi(d(\omega))$ for any $\omega\in \Gamma$. Thus, $(\bar{\Gamma},\bar{\gamma})$ is again a Hom-DC on $(A,\alpha)$ with differentiation $\bar{d}$.

In the rest of the section, the construction of the universal differential calculus on a monoidal Hom-algebra $(A,\alpha)$ is discussed. Let $(A,\alpha)$ be a monoidal Hom-algebra with Hom-multiplication $m_A:A\otimes A\to A$. The linear map $d:A\to A\otimes A$, in $\widetilde{\mathcal{H}}(\mathcal{M}_k)$, given by

$$da:=1\otimes \alpha^{-1}(a)-\alpha^{-1}(a)\otimes 1,\:\: \forall a\in A$$

satisfies the Leibniz rule: For $a,b\in A$,
\begin{eqnarray*}a\cdot db+da\cdot b&=&a\cdot(1\otimes \alpha^{-1}(b)-\alpha^{-1}(b)\otimes 1)+(1\otimes \alpha^{-1}(a)-\alpha^{-1}(a)\otimes 1)\cdot b\\
&=&\alpha^{-1}(a)1\otimes \alpha(\alpha^{-1}(b))-\alpha^{-1}(a)\alpha^{-1}(b)\otimes 1\\
&+&1\otimes \alpha^{-1}(a)\alpha^{-1}(b)-\alpha(\alpha^{-1}(a))\otimes 1\alpha^{-1}(b)\\
&=&a\otimes b-\alpha^{-1}(ab)\otimes 1+1\otimes \alpha^{-1}(ab)-a\otimes b=1\otimes \alpha^{-1}(ab)-\alpha^{-1}(ab)\otimes 1\\
&=&d(ab).
\end{eqnarray*}
For any $a\in A$, we get
\begin{eqnarray*}(d\circ \alpha)(a)&=&d(\alpha(a))=1\otimes a-a\otimes 1\\
&=&(\alpha\otimes \alpha)(1\otimes \alpha^{-1}(a)-\alpha^{-1}(a)\otimes 1)=(\alpha\otimes\alpha)(da),
\end{eqnarray*}
meaning $d$ is in $\widetilde{\mathcal{H}}(\mathcal{M}_k)$. Let $\Omega^1(A)$ be the $(A,\alpha)$-Hom-subbimodule of $(A\otimes A,\alpha\otimes \alpha)$ generated by elements of the form $a\cdot db$ for $a,b\in A$. Then we have

$$\Omega^1(A)=ker\: m_A.$$

Indeed, if $a\cdot db \in \Omega^1(A)$, then
$$m_A(a\cdot db)=m_A(a\otimes b-\alpha^{-1}(ab)\otimes 1)=ab-\alpha^{-1}(ab)1=0.$$
On the other hand, if $\sum_ia_i\otimes b_i \in ker\: m_A$ ($\sum_i$ denotes a finite sum), then $\sum_i a_ib_i=0$, thus we write

$$\sum_ia_i\otimes b_i=\sum_i(a_i\otimes b_i-\alpha^{-1}(a_ib_i)\otimes 1)=\sum_ia_i\cdot(1\otimes \alpha^{-1}(b_i)-\alpha^{-1}(b_i)\otimes 1)=\sum_ia_i\cdot db_i.$$

The left and right $(A,\alpha)$-Hom-module structures of $(\Omega^1(A),\beta)=(\Omega^1(A),(\alpha\otimes \alpha)|_{ker\:m_A})$ are respectively given by
$$a\cdot(b\cdot dc)=(\alpha^{-1}(a)b)\cdot d(\alpha(c))\qquad (a\cdot db)\cdot c=\alpha(a)\cdot d(b\alpha^{-1}(c))-(ab)\cdot dc,$$
for any $a,b,c\in A$. $(\Omega^1(A),\beta)$ is called the {\it universal first order differential calculus} of monoidal Hom-algebra $(A,\alpha)$.

Let $\bar{A}:=A/k\cdot 1$ be the quotient space of $A$ by the scalar multiples of the Hom-unit and let $\bar{a}$ denote the equivalence class $a+k\cdot 1$ for any $a\in A$. $\alpha$ induces an automorphism $\bar{\alpha}:\bar{A}\to \bar{A},\: \bar{a}\mapsto \bar{\alpha}(\bar{a})=\overline{\alpha(a)}$ and $(\bar{A},\bar{\alpha}) \in \widetilde{\mathcal{H}}(\mathcal{M}_k)$. Let  $A\otimes \bar{A}=\Omega^1(A)$ by the identification $a_0\otimes \overline{a_1}\mapsto a_0da_1$. This identification is well-defined since $d1=0$, and one can easily show that it is an $(A,\alpha)$-Hom-bimodule isomorphism once the Hom-bimodule structure of $(A\otimes \bar{A},\alpha\otimes\bar{\alpha})$ is given by, for $b\in A$,
$$b(a_0\otimes\overline{a_1})=\alpha^{-1}(b)a_0\otimes \overline{\alpha(a_1)},\:\: (a_0\otimes\overline{a_1})b=\alpha(a_0)\otimes \overline{a_1\alpha^{-1}(b)}-a_0a_1\otimes \bar{b}.$$

Now, we set
$$\Omega^n(A):=\otimes^{(n)}_{A}(\Omega^1(A))=\Omega^1(A)\otimes_A(\otimes^{(n-1)}_{A}(\Omega^1(A))).$$

Above, $\otimes^{(n)}_{A}(\Omega^1(A))$ has been put for $$T^n_A(\Omega^1(A))=\otimes^{t^n}_A(\Omega^1(A),\cdots,\Omega^1(A))=\Omega^1(A)\otimes_A(\Omega^1(A)\otimes_A(\cdots(\Omega^1(A)\otimes_A \Omega^1(A))\cdots )),$$
where $t^n$ is a fixed element in the set $T_n$ of planar binary trees with $n$ leaves and one root, which corresponds to the parenthesized monomial $x_1(x_2(\cdots(x_{n-1}x_n)\cdots))$ in $n$ noncommuting variables (see \cite{Yau0} e.g.). One should also refer to \cite[Section 6]{CaenepeelGoyvaerts} for the construction of tensor Hom-algebra applied to an object $(M,\mu)\in \widetilde{\mathcal{H}}(\mathcal{M}_k)$). So, we have, for any $n\geq 0$,

$$\Omega^n(A)=A\otimes(\otimes^{(n)}(\bar{A}))=A\otimes(\bar{A}\otimes(\bar{A}\otimes(\cdots(\bar{A}\otimes\bar{A})\cdots)))$$

by the correspondence $(A\otimes\bar{A})\otimes_A(A\otimes(\otimes^{(n-1)}(\bar{A})))=A\otimes(\otimes^{(n)}_A(\bar{A}))$, in
$\widetilde{\mathcal{H}}(\mathcal{M}_k)$,

\begin{eqnarray*}(a_0\otimes \overline{a_1})\otimes_A(a_2\otimes(\otimes^{(n-1)}(\overline{a_3},\cdots,\overline{a_{n+1}})))&\mapsto&
\alpha(a_0)\otimes(\otimes^{n}(\overline{\alpha^{-1}(a_1a_2)},\overline{a_3},\cdots,\overline{a_{n+1}}))\\
&&-a_0a_1\otimes(\otimes^{n}(\overline{a_2},\cdots,\overline{a_{n+1}})),
\end{eqnarray*}
where we have used the notation $\otimes^{(n)}(\overline{a_1},\cdots,\overline{a_{n}})$ for $\overline{a_1}\otimes(\overline{a_2}\otimes(\cdots(\overline{a_{n-1}}\otimes\overline{a_n})\cdots))$.
To the object $A\otimes(\otimes^{(n)}(\bar{A}))$ we associate the automorphism $\alpha\otimes(\otimes^{(n)}(\bar{\alpha})):A\otimes(\otimes^{(n)}(\bar{A}))\to A\otimes(\otimes^{(n)}(\bar{A}))$ given by

$$a_0\otimes(\otimes^{(n)}(\overline{a_1},\cdots,\overline{a_{n}}))\mapsto \alpha(a_0)\otimes(\otimes^{(n)}(\overline{\alpha(a_1)},\cdots,\overline{\alpha(a_{n})})),$$
for $a_0\in A$ and $\overline{a_i}\in \bar{A}$, $i=1,\cdots, n$.

On $\bigoplus_{n=0}^{\infty}\Omega^n(A)$, we define the differential by the linear mapping $d:A\otimes(\otimes^{(n)}(\bar{A}))\to A\otimes(\otimes^{(n+1)}(\bar{A}))$ of degree one by
\begin{equation}d(a_0\otimes(\overline{a_1}\otimes(\cdots(\overline{a_{n-1}}\otimes\overline{a_{n}})\cdots)))
=1\otimes(\overline{\alpha^{-1}(a_0)}\otimes(\cdots(\overline{\alpha^{-1}(a_{n-1})}\otimes\overline{\alpha^{-1}(a_{n})})\cdots)).
\end{equation}
We immediately obtain $d^{2}=0$ from the fact that $\bar{1}=0$. If we start with $a_n\in A$, multiplying on the left and applying $d$ repeatedly gives us the following
$$a_0\otimes(\overline{a_1}\otimes(\cdots(\overline{a_{n-1}}\otimes\overline{a_{n}})\cdots))=a_0(da_1(da_2(\cdots(da_{n-1}da_n)\cdots))),$$
where $a_0(da_1(da_2(\cdots(da_{n-1}da_n)\cdots)))=a_0\otimes_A(da_1\otimes_A(da_2\otimes_A(\cdots(da_{n-1}\otimes_A da_n)\cdots)))$.

We make $\bigoplus_{n=0}^{\infty}\Omega^n(A)$ an $(A,\alpha)$-Hom-bimodule as follows. The left $(A,\alpha)$-Hom-module structure is given by, for $b\in A$ and $a_0(da_1(da_2(\cdots(da_{n-1}da_n)\cdots)))\in \Omega^n(A)$, $n\geq 1$,

\begin{eqnarray*}\lefteqn{b(a_0(da_1(da_2(\cdots(da_{n-1}da_n)\cdots))))}\hspace{7em}\\
&=&(\alpha^{-1}(b)a_0)(d(\alpha(a_1))(d(\alpha(a_2))(\cdots(d(\alpha(a_{n-1}))d(\alpha(a_n))\cdots))).
\end{eqnarray*}
We now get the right $(A,\alpha)$-Hom-module structure: One can show that, for $b\in A$, $a_0da_1\in \Omega^1(A)$, $a_0(da_1da_2)\in \Omega^2(A)$ and $a_0(da_1(da_2da_3))\in \Omega^3(A)$, the following equations hold:

$$(a_0da_1) b=\alpha(a_0)d(a_1\alpha^{-1}(b))-(a_0a_1)db,$$

\begin{eqnarray*}(a_0(da_1da_2))b&=&\alpha(a_0)(d(\alpha(a_1))d(a_2\alpha^{-2}(b)))-\alpha(a_0)(d(a_1a_2)d(\alpha^{-1}(b)))\\
&+&(a_0\alpha(a_1))(d(\alpha(a_2))d(\alpha^{-1}(b))),
\end{eqnarray*}
\begin{eqnarray*}\lefteqn{(a_0(da_1(da_2da_3)))b}\hspace{2em}\\
&=&\alpha(a_0)(d(\alpha(a_1))(d(\alpha(a_2))d(a_3\alpha^{-3}(b))))-\alpha(a_0)(d(\alpha(a_1))(d(a_2a_3)d(\alpha^{-2}(b))))\\ \\
&+&\alpha(a_0)(d(a_1\alpha(a_2))(d(\alpha(a_3))d(\alpha^{-2}(b))))-(a_0\alpha(a_1))(d(\alpha^{2}(a_2))(d(\alpha(a_3))d(\alpha^{-2}(b)))).
\end{eqnarray*}
By induction, one can also prove that the equation

\begin{eqnarray*}&&(a_0(da_1(da_2(\cdots(da_{n-1}da_n)\cdots))))b\\ \\
&=&(-1)^{n}(a_0\alpha(a_1))(d(\alpha^{2}(a_2))(d(\alpha^{2}(a_3))(\cdots d(\alpha^{2}(a_{n-1}))(d(\alpha(a_n))d(\alpha^{-(n-1)}(b)))\cdots)))\\ \\
&+&\sum_{i=1}^{n-3}(-1)^{n-i}\alpha(a_0)(d(\alpha(a_1))(\cdots d(\alpha(a_{i-1}))(d(a_i\alpha(a_{i+1}))(d(\alpha^{2}(a_{i+2}))(\cdots \\
&&d(\alpha^{2}(a_{n-1}))(d(\alpha(a_n))d(\alpha^{-(n-1)}(b)))\cdots)))\cdots))\\ \\
&+&\alpha(a_0)(d(\alpha(a_1))(\cdots d(\alpha(a_{n-3}))(d(a_{n-2}\alpha(a_{n-1}))(d(\alpha(a_n))d(\alpha^{-(n-1)}(b))))\cdots))\\ \\
&-&\alpha(a_0)(d(\alpha(a_1))(\cdots d(\alpha(a_{n-2}))(d(a_{n-1}a_n)d(\alpha^{-(n-1)}(b)))\cdots))\\ \\
&+&\alpha(a_0)(d(\alpha(a_1))(\cdots (d(\alpha(a_{n-1}))d(a_n\alpha^{-n}(b)))\cdots))
\end{eqnarray*}
holds for $a_0(da_1(da_2(\cdots(da_{n-1}da_n)\cdots)))\in \Omega^{n}(A)$, $n\geq 4$.

Next, we define the Hom-multiplication between any two parenthesized monomials, by using the right Hom-module structure given above, as

\begin{eqnarray}\label{Hom-multiplication-of-monomials}\nonumber\lefteqn{[a_0(da_1(\cdots(da_{n-1}da_n)\cdots))]
[a_{n+1}(da_{n+2}(\cdots(da_{n+k-1}da_{n+k})\cdots))]}\hspace{2em}\\
&=&[(\alpha^{-1}(a_0)(d(\alpha^{-1}(a_1))(\cdots(d(\alpha^{-1}(a_{n-1}))d(\alpha^{-1}(a_n))\cdots))))a_{n+1}]\\
\nonumber&&[d(\alpha(a_{n+2}))(\cdots(d(\alpha(a_{n+k-1}))d(\alpha(a_{n+k}))\cdots))],
\end{eqnarray}
for $\omega_n=a_0(da_1(\cdots(da_{n-1}da_n)\cdots))\in \Omega^n(A)$ and $\omega_{k-1}=a_{n+1}(da_{n+2}(\cdots(da_{n+k-1}da_{n+k})\cdots))$ $\in \Omega^{k-1}(A)$. For any $n\geq 4$, we explicitly write the above multiplication:

\begin{eqnarray*}&&\omega_n\omega_{k-1}\\ \\
&=&[a_0(da_1(\cdots(da_{n-1}da_n)\cdots))][a_{n+1}(da_{n+2}(\cdots(da_{n+k-1}da_{n+k})\cdots))]\\ \\
&=&[(\alpha^{-1}(a_0)(d(\alpha^{-1}(a_1))(\cdots(d(\alpha^{-1}(a_{n-1}))d(\alpha^{-1}(a_n))\cdots))))a_{n+1}]\\
&&[d(\alpha(a_{n+2}))(\cdots(d(\alpha(a_{n+k-1}))d(\alpha(a_{n+k}))\cdots))]\\ \\
&=&[(-1)^{n}(\alpha^{-1}(a_0)a_1)(d(\alpha(a_2))(d(\alpha(a_3))(\cdots d(\alpha(a_{n-1}))(d(a_n)d(\alpha^{-(n-1)}(a_{n+1})))\cdots)))\\
&+&\sum_{i=1}^{n-3}(-1)^{n-i}a_0(d(a_1)(\cdots d(a_{i-1})(d(\alpha^{-1}(a_i)a_{i+1})(d(\alpha(a_{i+2}))(\cdots \\
&& d(\alpha(a_{n-1}))(d(a_n)d(\alpha^{-(n-1)}(a_{n+1})))\cdots)))\cdots))\\
&+&a_0(d(a_1)(\cdots d(a_{n-3})(d(\alpha^{-1}(a_{n-2})a_{n-1})(d(a_n)d(\alpha^{-(n-1)}(a_{n+1}))))\cdots))\\
&-&a_0(d(a_1)(\cdots d(a_{n-2})(d(\alpha^{-1}(a_{n-1}a_n))d(\alpha^{-(n-1)}(a_{n+1})))\cdots))\\
&+&a_0(d(a_1)(\cdots (d(a_{n-1})d(\alpha^{-1}(a_n)\alpha^{-n}(a_{n+1})))\cdots)) ]\\
&&[d(\alpha(a_{n+2}))(\cdots(d(\alpha(a_{n+k-1}))d(\alpha(a_{n+k}))\cdots))]\\ \\
&=&(-1)^{n}(a_0\alpha(a_1))(d(\alpha^{2}(a_2))(\cdots d(\alpha^{2}(a_{n-1}))(d(\alpha(a_n))(d(\alpha^{-(n-1)}(a_{n+1}))(\cdots \\
&&(d(\alpha^{-(n-1)}(a_{n+k-1}))d(\alpha^{-(n-1)}(a_{n+k})))\cdots)))\cdots))\\ \\
&+&\sum_{i=1}^{n-3}(-1)^{n-i}\alpha(a_0)(d(\alpha(a_1))(\cdots d(\alpha(a_{i-1}))(d(a_i\alpha(a_{i+1}))(d(\alpha^2(a_{i+2}))(\cdots d(\alpha^2(a_{n-1})) \\
&&(d(\alpha(a_n))(d(\alpha^{-(n-1)}(a_{n+1}))(\cdots(d(\alpha^{-(n-1)}(a_{n+k-1}))d(\alpha^{-(n-1)}(a_{n+k})))\cdots)))\cdots)))\cdots))\\ \\
&+&\alpha(a_0)(d(\alpha(a_1))(\cdots d(\alpha(a_{n-3}))(d(a_{n-2}\alpha(a_{n-1}))(d(\alpha(a_n))(d(\alpha^{-(n-1)}(a_{n+1}))(\cdots \\
&& (d(\alpha^{-(n-1)}(a_{n+k-1}))d(\alpha^{-(n-1)}(a_{n+k})))\cdots ))))\cdots))\\ \\
&-&\alpha(a_0)(d(\alpha(a_1))\cdots d(\alpha(a_{n-2}))(d(a_{n-1}a_{n-1})(d(\alpha^{-(n-1)}(a_{n+1}))(\cdots\\
&& (d(\alpha^{-(n-1)}(a_{n+k-1}))d(\alpha^{-(n-1)}(a_{n+k})))\cdots ))\cdots))\\ \\
&+&\alpha(a_0)(d(\alpha(a_1))(\cdots d(\alpha(a_{n-2}))(d(\alpha(a_{n-1}))(d(\alpha^{-1}(a_{n})\alpha^{-n}(a_{n+1}))(d(\alpha^{-(n-1)}(a_{n+2}))(\cdots\\
&& (d(\alpha^{-(n-1)}(a_{n+k-1}))d(\alpha^{-(n-1)}(a_{n+k})))\cdots ))))\cdots))
\end{eqnarray*}

On the other hand, we have the following computations for $\omega_n$ and $\omega_{k-1}$ given above:
\begin{eqnarray*} &&d\omega_n\omega_{k-1}\\
&=&[da_0(da_1(\cdots(da_{n-1}da_n)\cdots))][a_{n+1}(da_{n+2}(\cdots(da_{n+k-1}da_{n+k})\cdots))]\\
&=&[(d(\alpha^{-1}(a_0))(d(\alpha^{-1}(a_1))(\cdots(d(\alpha^{-1}(a_{n-1}))d(\alpha^{-1}(a_n))\cdots))))a_{n+1}]\\
&&[d(\alpha(a_{n+2}))(\cdots(d(\alpha(a_{n+k-1}))d(\alpha(a_{n+k})))\cdots)]\\
&=&(-1)^{n+1}\alpha(a_0)(d(\alpha(a_1))(\cdots(d(\alpha(a_{n-1}))(da_n(d(\alpha^{-n}(a_{n+1}))(\cdots\\
&&(d(\alpha^{-n}(a_{n+k-1}))d(\alpha^{-n}(a_{n+k})))\cdots))))\cdots))\\ \\
&+&(-1)^{n}d(a_0\alpha(a_1))(d(\alpha^{2}(a_{2}))(\cdots(d(\alpha^{2}(a_{n-1}))(d(\alpha(a_n))(d(\alpha^{-(n-1)}(a_{n+1}))(\cdots\\
&&(d(\alpha^{-(n-1)}(a_{n+k-1}))d(\alpha^{-(n-1)}(a_{n+k})))\cdots))))\cdots))\\ \\
&+&\sum_{i=2}^{n-2}(-1)^{n+1-i}d(\alpha(a_0))(\cdots(d(\alpha(a_{i-2}))(d(a_{i-1}\alpha(a_i))(d(\alpha^{2}(a_{i+1}))(\cdots(d(\alpha^{2}(a_{n-1}))\\
&&(d(\alpha(a_n))(d(\alpha^{-(n-1)}(a_{n+1}))(\cdots(d(\alpha^{-(n-1)}(a_{n+k-1}))d(\alpha^{-(n-1)}(a_{n+k})))\cdots))))\cdots))))\cdots)\\ \\
&+&d(\alpha(a_0))(\cdots(d(\alpha(a_{n-3}))(d(a_{n-2}\alpha(a_{n-1}))(d(\alpha(a_n))(d(\alpha^{-(n-1)}(a_{n+1}))(\cdots\\
&&(d(\alpha^{-(n-1)}(a_{n+k-1}))d(\alpha^{-(n-1)}(a_{n+k})))\cdots)))))\cdots)\\ \\
&-&d(\alpha(a_0))(\cdots(d(\alpha(a_{n-2}))(d(a_{n-1}a_n)(d(\alpha^{-(n-1)}(a_{n+1}))(\cdots\\
&&(d(\alpha^{-(n-1)}(a_{n+k-1}))d(\alpha^{-(n-1)}(a_{n+k})))\cdots))))\cdots)\\ \\
&+&d(\alpha(a_0))(\cdots(d(\alpha(a_{n-1}))(d(\alpha^{-1}(a_n)\alpha^{-n}(a_{n+1}))(d(\alpha^{-(n-1)}(a_{n+2}))(\cdots\\
&&(d(\alpha^{-(n-1)}(a_{n+k-1}))d(\alpha^{-(n-1)}(a_{n+k})))\cdots))))\cdots)
\end{eqnarray*}
and
\begin{eqnarray*} &&\omega_nd\omega_{k-1}\\
&=&[a_0(da_1(\cdots(da_{n-1}da_n)\cdots))][da_{n+1}(\cdots(da_{n+k-1}da_{n+k})\cdots)]\\
&=&\alpha(a_0)([da_1(\cdots(da_{n-1}da_n)\cdots)][d(\alpha^{-1}(a_{n+1}))(\cdots(d(\alpha^{-1}(a_{n+k-1}))d(\alpha^{-1}(a_{n+k})))\cdots)])\\
&=&\alpha(a_0)(d(\alpha(a_1))(\cdots(d(\alpha(a_{n-1}))(d(a_n)(d(\alpha^{-n}(a_{n+1}))(\cdots\\
&&(d(\alpha^{-n}(a_{n+k-1}))d(\alpha^{-n}(a_{n+k})))\cdots))))\cdots)).
\end{eqnarray*}
Thus, the equation below holds:

\begin{eqnarray*}&&d\omega_n\omega_{k-1}+(-1)^n\omega_nd\omega_{k-1}\\
&=&(-1)^{n}d(a_0\alpha(a_1))(d(\alpha^{2}(a_{2}))(\cdots(d(\alpha^{2}(a_{n-1}))(d(\alpha(a_n))(d(\alpha^{-(n-1)}(a_{n+1}))(\cdots\\
&&(d(\alpha^{-(n-1)}(a_{n+k-1}))d(\alpha^{-(n-1)}(a_{n+k})))\cdots))))\cdots))\\
&+&\sum_{i=2}^{n-2}(-1)^{n+1-i}d(\alpha(a_0))(\cdots(d(\alpha(a_{i-2}))(d(a_{i-1}\alpha(a_i))(d(\alpha^{2}(a_{i+1}))(\cdots(d(\alpha^{2}(a_{n-1}))\\
&&(d(\alpha(a_n))(d(\alpha^{-(n-1)}(a_{n+1}))(\cdots(d(\alpha^{-(n-1)}(a_{n+k-1}))d(\alpha^{-(n-1)}(a_{n+k})))\cdots))))\cdots))))\cdots)\\
&+&d(\alpha(a_0))(\cdots(d(\alpha(a_{n-3}))(d(a_{n-2}\alpha(a_{n-1}))(d(\alpha(a_n))(d(\alpha^{-(n-1)}(a_{n+1}))(\cdots\\
&&(d(\alpha^{-(n-1)}(a_{n+k-1}))d(\alpha^{-(n-1)}(a_{n+k})))\cdots)))))\cdots)\\
&-&d(\alpha(a_0))(\cdots(d(\alpha(a_{n-2}))(d(a_{n-1}a_n)(d(\alpha^{-(n-1)}(a_{n+1}))(\cdots\\
&&(d(\alpha^{-(n-1)}(a_{n+k-1}))d(\alpha^{-(n-1)}(a_{n+k})))\cdots))))\cdots)\\
&+&d(\alpha(a_0))(\cdots(d(\alpha(a_{n-1}))(d(\alpha^{-1}(a_n)\alpha^{-n}(a_{n+1}))(d(\alpha^{-(n-1)}(a_{n+2}))(\cdots\\
&&(d(\alpha^{-(n-1)}(a_{n+k-1}))d(\alpha^{-(n-1)}(a_{n+k})))\cdots))))\cdots)\\
&=&(-1)^{n}d(a_0\alpha(a_1))(d(\alpha^{2}(a_{2}))(\cdots(d(\alpha^{2}(a_{n-1}))(d(\alpha(a_n))(d(\alpha^{-(n-1)}(a_{n+1}))(\cdots\\
&&(d(\alpha^{-(n-1)}(a_{n+k-1}))d(\alpha^{-(n-1)}(a_{n+k})))\cdots))))\cdots))\\
&+&\sum_{i=1}^{n-3}(-1)^{n-i}d(\alpha(a_0))(\cdots(d(\alpha(a_{i-1}))(d(a_{i}\alpha(a_{i+1}))(d(\alpha^{2}(a_{i+2}))(\cdots(d(\alpha^{2}(a_{n-1}))\\
&&(d(\alpha(a_n))(d(\alpha^{-(n-1)}(a_{n+1}))(\cdots(d(\alpha^{-(n-1)}(a_{n+k-1}))d(\alpha^{-(n-1)}(a_{n+k})))\cdots))))\cdots))))\cdots)\\
&+&d(\alpha(a_0))(\cdots(d(\alpha(a_{n-3}))(d(a_{n-2}\alpha(a_{n-1}))(d(\alpha(a_n))(d(\alpha^{-(n-1)}(a_{n+1}))(\cdots\\
&&(d(\alpha^{-(n-1)}(a_{n+k-1}))d(\alpha^{-(n-1)}(a_{n+k})))\cdots)))))\cdots)\\
&-&d(\alpha(a_0))(\cdots(d(\alpha(a_{n-2}))(d(a_{n-1}a_n)(d(\alpha^{-(n-1)}(a_{n+1}))(\cdots\\
&&(d(\alpha^{-(n-1)}(a_{n+k-1}))d(\alpha^{-(n-1)}(a_{n+k})))\cdots))))\cdots)\\
&+&d(\alpha(a_0))(\cdots(d(\alpha(a_{n-1}))(d(\alpha^{-1}(a_n)\alpha^{-n}(a_{n+1}))(d(\alpha^{-(n-1)}(a_{n+2}))(\cdots\\
&&(d(\alpha^{-(n-1)}(a_{n+k-1}))d(\alpha^{-(n-1)}(a_{n+k})))\cdots))))\cdots)\\
&=&d(\omega_n\omega_{k-1}),
\end{eqnarray*}
which is the graded Leibniz rule. Next, we verify by induction that the following identity holds:

\begin{equation}\label{derivative-of-monomials}d(a_0(da_1(\cdots(da_{n-1}da_n)\cdots)))=da_0(da_1(\cdots(da_{n-1}da_n)\cdots))\end{equation}

using the graded Leibniz rule and the equation $d^2=d\circ d=0$. For $a_0da_1\in \Omega^1(A)$,
$$d(a_0da_1)=da_0da_1+(-1)^0a_0d(d(a_1))=da_0da_1.$$
Suppose now that the identity
$$d(a_0(da_1(\cdots(da_{n-2}da_{n-1})\cdots)))=da_0(da_1(\cdots(da_{n-2}da_{n-1})\cdots))$$
holds for $a_0(da_1(\cdots(da_{n-2}da_{n-1})\cdots))\in\Omega^{n-1}(A)$, that is, if we replace $a_i$ with $a_{i+1}$ for $i=0,\cdots, n-1$, we have $d(a_1(da_2(\cdots(da_{n-1}da_{n})\cdots)))=da_1(da_2(\cdots(da_{n-1}da_{n})\cdots))$. Thus, for $a_0(da_1(\cdots(da_{n-1}da_{n})\cdots))\in\Omega^{n}(A)$,

\begin{eqnarray*}&&d(a_0(da_1(\cdots(da_{n-1}da_n)\cdots)))\\
&=&da_0(da_1(da_2(\cdots(da_{n-1}da_n)\cdots)))+(-1)^0a_0d(da_1(da_2(\cdots(da_{n-1}da_n)\cdots)))\\
&=&da_0(da_1(da_2(\cdots(da_{n-1}da_n)\cdots)))+(-1)^0a_0d(d(a_1(da_2(\cdots(da_{n-1}da_n)\cdots))))\\
&=&da_0(da_1(da_2(\cdots(da_{n-1}da_n)\cdots))).
\end{eqnarray*}
Let $(\Gamma,\gamma)$ be another Hom-DC on $(A,\alpha)$ with differential $\tilde{d}$ and let the morphism $\psi:\Omega(A)\to \Gamma$, in $\widetilde{\mathcal{H}}(\mathcal{M}_k)$, be given by
$$\psi(a)=a\:\mbox{and}\: \psi(a_0(da_1(\cdots(da_{n-1}da_n)\cdots)))=a_0(\tilde{d}a_1(\cdots(\tilde{d}a_{n-1}\tilde{d}a_n)\cdots)),\: n\geq 1$$
for $a\in A$, $a_0(da_1(\cdots(da_{n-1}da_n)\cdots))\in \Omega^n(A)$. Clearly, $\psi$ is surjective by its definition. Now, let $\mathcal{N}:=ker \psi$ be the kernel of $\psi$. From the equations (\ref{Hom-multiplication-of-monomials}) and (\ref{derivative-of-monomials}) it is concluded that $\mathcal{N}$ is a differential Hom-ideal of $\Omega(A)$. Thus, $\Gamma$ is identified with $\Omega(A)/\mathcal{N}$ showing the universality of $\Omega(A)$.

\section{Left-Covariant FODC over Monoidal Hom-Hopf Algebras}

\subsection{Left-Covariant Hom-FODC and Their Right Hom-ideals}

Let $(H,\alpha)$ be a monoidal Hom-Hopf algebra with a bijective antipode throughout the section. $(H,\alpha)$ is a left Hom-quantum space for itself with respect to the Hom-comultiplication $\Delta:H\to H\otimes H, h\mapsto h_1\otimes h_2$. Thus, by applying Definition \ref{left-covariant-Hom-FODC} to the monoidal Hom-Hopf algebra $(H,\alpha)$ we obtain the following
\begin{definition}\label{left-covariant-Hom-FODC1}A FODC $(\Gamma,\gamma)$ over the monoidal Hom-Hopf algebra $(H,\alpha)$ is said to be {\it left-covariant} if $(\Gamma,\gamma)$ is a left-covariant FODC over the left Hom-quantum space $(H,\alpha)$ with left Hom-coaction $\varphi=\Delta$ in Definition \ref{left-covariant-Hom-FODC}.
\end{definition}

\begin{remark}\label{left-covariant-Hom-FODC2}According to Proposition \ref{left-covariance-of-a-Hom-FODC}, an $(H,\alpha)$-Hom-FODC $(\Gamma,\gamma)$ is left-covariant if and only if there exists a morphism $\phi:\Gamma\to H\otimes \Gamma $ in $\widetilde{\mathcal{H}}(\mathcal{M}_k)$ such that, for $h,g\in H$,

\begin{equation}\label{left-coaction-on-Hom-FODC}\phi(h\cdot dg)=\Delta(h)(id\otimes d)(\Delta(g)).\end{equation}

In the proof of Proposition \ref{left-covariance-of-a-Hom-FODC}, it has been shown that if there is such a morphism $\phi$, it defines a left Hom-comodule structure of $(\Gamma,\gamma)$ on $(H,\alpha)$ and satisfies
$$\phi(\alpha(h)\cdot(\omega\cdot g))=\Delta(\alpha(h))(\phi(\omega)\Delta(g))$$

for $ h,g\in H$ and $\omega \in \Gamma$. From this it follows that $(\Gamma,\gamma)$ is a left-covariant $(H,\alpha)$-Hom-bimodule.
\end{remark}

Let $(\Gamma,\gamma)$ be a left-covariant $(H,\alpha)$-Hom-FODC with derivation $d:H\to \Gamma$. By the above remark $(\Gamma,\gamma)$ is a left-covariant $(H,\alpha)$-Hom-bimodule, and then by adapting the structure theory of left-covariant Hom-bimodules, which is discussed in Lemma 4.7 and Proposition 4.9 in \cite{Karacuha1}, to $(\Gamma,\gamma)$ we summarize the following results. We have the unique projection $P_L:(\Gamma,\gamma)\to (^{coH}\Gamma,\gamma|_{^{coH}\Gamma})$ given by $P_L(\varrho)=S(\varrho_{(-1)})\varrho_{(0)}$, for all $\varrho\in \Gamma$, such that
$$P_L(h\cdot\varrho)=\varepsilon(h)\gamma(P_L(\varrho)),\qquad \varrho=\varrho_{(-1)}P_L(\varrho_{(0)})$$ and
$$P_L(\varrho\cdot h)=\widetilde{ad}_R(h)(P_L(\varrho))=:P_L(\varrho)\lhd h$$
for any $h\in H$ and $\varrho\in \Gamma$. Let us now define a linear mapping $\omega_{\Gamma}:H\overset{d}{\rightarrow}\Gamma \overset{P_L}{\rightarrow} \:^{coH}\Gamma$ by
$$\omega_{\Gamma}(h)=P_L(dh),\: \forall h\in H.$$
Obviously, it is in $\widetilde{\mathcal{H}}(\mathcal{M}_k)$, that is, $\omega_{\Gamma}\circ \alpha=\gamma\circ\omega_{\Gamma}$. Since $\phi(dh)=(dh)_{(-1)}\otimes (dh)_{(0)}=(id \otimes d)(\Delta(h))=h_1\otimes dh_2$ by the above remark, we obtain

\begin{equation}\label{omega-for-Hom-FODC}\omega_{\Gamma}(h)=P_L(dh)=S(h_1)\cdot dh_2,\: \forall h\in H.\end{equation}

On the other hand, we can write $dh=(dh)_{(-1)}\cdot P_L((dh)_{(0)})=h_1\cdot P_L(dh_2)$, that is,

\begin{equation}\label{derivation-by-omega}dh=h_1\cdot\omega_{\Gamma}(h_2),\: \forall h\in H. \end{equation}

We will drop the subscript $\Gamma$ from $\omega_{\Gamma}(\cdot)$. By definition, for any $h\in H$, $\omega(h)\in\: ^{coH}\Gamma$. Conversely, let $\varrho=\sum_ih_i\cdot dg_i \in \: ^{coH}\Gamma$ for $h_i,g_i \in H$. Then

$$\varrho=P_L(\varrho)=\sum_i\varepsilon(h_i)\gamma(P_L(dg_i))=\sum_i\varepsilon(h_i)\gamma(\omega(g_i))=\sum_i\varepsilon(h_i)\omega(\alpha(g_i)),$$

showing that $\rho \in \omega(H)$. Thus, we get $\omega(H)=\: ^{coH}\Gamma$ which implies that $\Gamma=H\cdot\omega(H)=\omega(H)\cdot H$ and hence any $k$-linear basis of $\omega(H)$ is a left $(H,\alpha)$-Hom-module basis and a right $(H,\alpha)$-Hom-module basis for $(\Gamma,\gamma)$.

For $h,g\in H$, we get
\begin{eqnarray*}\omega(h)\lhd g &=&P_L(\omega(h)\cdot g)=P_L((S(h_1)\cdot dh_2)\cdot g)\\
&=&P_L(S(\alpha(h_1))\cdot (dh_2\cdot \alpha^{-1}(g)))=\varepsilon(S(\alpha(h_1)))\gamma(P_L(dh_2\cdot \alpha^{-1}(g)))\\
&=&\varepsilon(h_1)\gamma(P_L(dh_2\cdot \alpha^{-1}(g)))=\gamma(P_L(d(\alpha^{-1}(h))\cdot \alpha^{-1}(g)))\\
&=&\gamma(P_L(d(\alpha^{-1}(hg))-\alpha^{-1}(h)\cdot d(\alpha^{-1}(g))))\\
&=&\gamma(\omega(\alpha^{-1}(hg)))-\gamma(\varepsilon(\alpha^{-1}(h))\gamma(P_L(d(\alpha^{-1}(g)))))\\
&=&\omega(hg)-\varepsilon(h)\gamma^{2}(\omega(\alpha^{-1}(g)))=\omega(hg)-\varepsilon(h)\omega(\alpha(g))\\
&=&\omega(hg-\varepsilon(h)\alpha(g))=\omega((h-\varepsilon(h)1)g).
\end{eqnarray*}
Thus, by setting the notation $\bar{h}:=h-\varepsilon(h)1$, we have
\begin{equation}\omega(h)\lhd g=\widetilde{ad}_R(g)(\omega(h))=\omega(\bar{h}g),\end{equation}
and we rewrite the $(H,\alpha)$-Hom-bimodule structure as
\begin{equation}\label{Hom-bimod-structure1}g'\cdot(g\cdot\omega(h))=(\alpha^{-1}(g')g)\cdot\omega(\alpha(h)),\end{equation}
\begin{equation}\label{Hom-bimod-structure2} (g'\cdot\omega(h))\cdot g=(g'g_1)\cdot(\omega(h)\lhd g_2)=(g'g_1)\cdot\omega(\bar{h}g_2),\end{equation}
for $g,g',h \in H$.

In the following example we introduce the universal FODC over monoidal Hom-Hopf-algebra $(H,\alpha)$.
\begin{example}We define $(\Omega^{1}(H),\beta):=(H\otimes ker\varepsilon, \alpha\otimes\alpha')$, where $\alpha'=\alpha|_{ker\varepsilon}$. Let us denote the element $1\otimes \alpha^{-1}(\bar{g})=1\otimes \overline{\alpha^{-1}(g)}$, for $g\in H$, by $\omega(g)$. Thus we identify $g\otimes\bar{h}\in \Omega^1(H)$, where $g,h\in H$, with $g\cdot\omega(h)$. We then introduce the Hom-bimodule structure of $\Omega^{1}(H)$ as in (\ref{Hom-bimod-structure1}) and (\ref{Hom-bimod-structure2}), for all $g,g',h\in H$,
$$g'\cdot(g\cdot\omega(h)):=(\alpha^{-1}(g')g)\cdot\omega(\alpha(h)),$$
$$(g'\cdot\omega(h))\cdot g:=(g'g_1)\cdot\omega(\bar{h}g_2),$$
and a linear mapping
$$d:H\to \Omega^1(H),\: h\mapsto h_1\otimes \overline{h_2}=h_1\cdot\omega(h_2).$$
For any $g,h \in H$,

\begin{eqnarray*}g\cdot dh+dg \cdot h &=&g\cdot(h_1\cdot \omega(h_2))+(g_1\cdot\omega(g_2))\cdot h\\
&=&(\alpha^{-1}(g)h_1)\cdot \omega(\alpha(h_2))+(g_1h_1)\cdot\omega(\overline{g_2}h_2)\\
&=&(\alpha^{-1}(g)h_1)\cdot \omega(\alpha(h_2))+(g_1h_1)\cdot\omega(g_2h_2)-(g_1h_1)\omega((\varepsilon(g_2)1)h_2)\\
&=&(\alpha^{-1}(g)h_1)\cdot \omega(\alpha(h_2))+(g_1h_1)\cdot\omega(g_2h_2)-(\alpha^{-1}(g)h_1)\cdot \omega(\alpha(h_2))\\
&=&(g_1h_1)\cdot\omega(g_2h_2)=(gh)_1\omega((gh)_2)\\
&=&d(gh),
\end{eqnarray*}
showing that $d$ satisfies the Leibniz rule.
$$d(\alpha(h))=\alpha(h_1)\cdot\omega(\alpha(h_2))=\alpha(h_1)\cdot\beta(\omega(h_2))=\beta(h_1\cdot\omega(h_2))=\beta(dh),$$
which means that $d\in \widetilde{\mathcal{H}}(\mathcal{M}_k)$.
\begin{eqnarray*}\omega(h)&=&\omega(\alpha(\varepsilon(h_1)h_2))=\varepsilon(h_1)\omega(\alpha(h_2))\\
&=&\varepsilon(h_1)\beta(\omega(h_2))=(\varepsilon(h_1)1)\cdot\omega(h_2)\\
&=&(S(h_{11})h_{12})\cdot \omega(h_2)=(S(\alpha^{-1}(h_1))h_{21})\cdot \omega(\alpha(h_{22}))\\
&=&\alpha(S(\alpha^{-1}(h_1)))\cdot(h_{21}\cdot \beta^{-1}(\omega(\alpha(h_{22}))))\\
&=&S(h_1)\cdot(h_{21}\cdot\omega(h_{22}))=S(h_1)\cdot d(h_2),
\end{eqnarray*}
which proves that $\Omega^1(H)=H\cdot dH$. Therefore, $(\Omega^1(H),\beta)$ is an $(H,\alpha)$-Hom-FODC.

For another $(H,\alpha)$-Hom-FODC $(\Gamma,\gamma)$ with differentiation $\bar{d}:H\to \Gamma$, let us define the linear map $\psi: \Omega^1(H)\to \Gamma$ by $\psi(h\cdot dg)=h\cdot \bar{d}g$, where $g,h\in H$. It is well-defined: Suppose that $\sum_ih_i\cdot dg_i=0$ in $\Omega^1(H)$, where $h_i,g_i\in H$. Then we have
\begin{eqnarray*}\sum_ih_i\cdot dg_i&=&\sum_ih_i\cdot(g_{i,1}\otimes\overline{g_{i,2}})=\sum_i\alpha^{-1}(h_i)g_{i,1}\otimes\alpha(\overline{g_{i,2}})\\
&=&\sum_i\alpha^{-1}(h_i)g_{i,1}\otimes\overline{\alpha(g_{i,2})}=\sum_i\alpha^{-1}(h_i)g_{i,1}\otimes(\alpha(g_{i,2})-\varepsilon(\alpha(g_{i,2}))1)\\
&=&\sum_i[\alpha^{-1}(h_i)g_{i,1}\otimes\alpha(g_{i,2})-\alpha^{-1}(h_i)g_{i,1}\varepsilon(g_{i,2})\otimes 1)]\\
&=&\sum_i[\alpha^{-1}(h_i)g_{i,1}\otimes\alpha(g_{i,2})-\alpha^{-1}(h_ig_i)\otimes 1)]=0.
\end{eqnarray*}
So, by applying $(m\otimes id)\circ \tilde{a}^{-1}\circ(id\otimes S\otimes id)\circ(id\otimes \Delta)$ to

$$\sum_ih_i\cdot dg_i=\sum_i[\alpha^{-1}(h_i)g_{i,1}\otimes\alpha(g_{i,2})-\alpha^{-1}(h_ig_i)\otimes 1)]=0,$$

we acquire the equality $\sum_i(h_i\otimes g_i-\alpha^{-1}(h_ig_i)\otimes 1)=0$. Thus $\sum_ih_i\cdot \bar{d}g_i=0$ in $\Gamma$ concluding that $\psi$ is well-defined. On the other hand we prove that $\psi\in \widetilde{\mathcal{H}}(\mathcal{M}_k)$:

\begin{eqnarray*}\psi(\beta(h\cdot dg))&=&\psi(\alpha(h)\cdot \beta(dg))=\psi(\alpha(h)\cdot d(\alpha(g)))\\
&=&\alpha(h)\cdot \bar{d}(\alpha(g))=\alpha(h)\cdot \gamma(\bar{d}(g))=\gamma(h\cdot \bar{d}g)=\gamma(\psi(h\cdot dg)).
\end{eqnarray*}

The subobject $(ker\psi, \beta|_{ker\psi})=(\mathcal{N},\beta')$ is an $(H,\alpha)$-Hom-subbimodule of $(\Omega^1(H),\beta)$: Indeed, for $h'\in H$ and $h\cdot dg \in \mathcal{N} $,

\begin{eqnarray*}\psi(h'\cdot(h\cdot dg))&=&\psi((\alpha^{-1}(h')h)\cdot d(\alpha(g)))=(\alpha^{-1}(h')h)\cdot \bar{d}(\alpha(g))\\
&=&h'\cdot(h\cdot \bar{d}g)=h'\cdot \psi(h\cdot dg)=0,
\end{eqnarray*}

\begin{eqnarray*}\psi((h\cdot dg)\cdot h')&=&\psi(\alpha(h)\cdot d(g\alpha^{-1}(h'))-(hg)\cdot dh')\\
&=&\alpha(h)\cdot \bar{d}(g\alpha^{-1}(h'))-(hg)\cdot \bar{d}h'=\alpha(h)\cdot \bar{d}(g\alpha^{-1}(h'))-\alpha(h)\cdot(g\cdot\bar{d}(\alpha^{-1}(h')))\\
&=&\alpha(h)\cdot(\bar{d}g\cdot \alpha^{-1}(h'))=(h\cdot \bar{d}g)\cdot h'=0.
\end{eqnarray*}
Hence we have the quotient object $(\Omega^1(H)/\mathcal{N},\bar{\beta})$ as $(H,\alpha)$-Hom-bimodule, where the automorphism $\bar{\beta}$ is induced by $\beta$ and define the $(H,\alpha)$-bilinear map $\bar{\psi}:\Omega^1(H)/\mathcal{N}\to \Gamma,\: h\cdot dg\mapsto h\cdot\bar{d}g$, which is surjective by definition. Since $ker\bar{\psi}=\mathcal{N}$, $\bar{\psi}$ is 1-1, showing that $\Gamma$ is isomorphic to the quotient $\Omega^1(H)/\mathcal{N}$. Therefore $(\Omega^1(H),\beta)$ is the universal Hom-FODC over $(H,\alpha)$.
\end{example}

We define the subobject

\begin{equation}\mathcal{R}_{\Gamma}=\{h\in ker\varepsilon|\:\omega_{\Gamma}(h)=0\}\end{equation}

of $(ker\varepsilon,\alpha|_{ker\varepsilon})$ for a given left-covariant $(H,\alpha)$-Hom-FODC $(\Gamma,\gamma)$, which is clearly a Hom-ideal of $(H,\alpha)$.
We now prove that there is a one-to-one correspondence between left-covariant $(H,\alpha)$-Hom-FODC's and right Hom-ideals $\mathcal{R}$.

\begin{proposition}\label{one-one-corresp-Hom-ideals}
\begin{enumerate}
\item Let $(\mathcal{R},\alpha'')$ be a right Hom-ideal of $(H,\alpha)$ which is a subobject of $(ker\varepsilon,\alpha')$, where $\alpha''=\alpha|_{\mathcal{R}}$. Then $\mathcal{N}:=H\cdot \omega_{\Omega^1(H)}(\mathcal{R})$ is an $(H,\alpha)$-Hom-subbimodule of $(\Omega^1(H),\beta)$. Furthermore, $(\Gamma,\gamma):=(\Omega^1(H)/\mathcal{N},\bar{\beta})$ is a left-covariant Hom-FODC over $(H,\alpha)$ such that $\mathcal{R}_{\Gamma}=\mathcal{R}$.
\item  For a given left-covariant $(H,\alpha)$-Hom-FODC $(\Gamma,\gamma)$, $\mathcal{R}_{\Gamma}$ is a right Hom-ideal of $(H,\alpha)$ and $\Gamma$ is isomorphic to $\Omega^1(H)/H\cdot \omega_{\Omega^1(H)}(\mathcal{R}_{\Gamma})$.
\end{enumerate}
\end{proposition}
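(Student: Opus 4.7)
The plan is to exploit two tools: the explicit $(H,\alpha)$-Hom-bimodule structure of the universal Hom-FODC $(\Omega^1(H),\beta)$ constructed in the preceding example, and the structure theory of left-covariant Hom-bimodules summarized just before the statement, which yields $\Gamma = H\cdot\omega_\Gamma(H)$ with $\omega_\Gamma(H) = {}^{coH}\Gamma$ and $\omega_\Gamma(1)=0$. The two items will be proved in order; together they give mutually inverse assignments between right Hom-ideals $\mathcal{R}\subseteq \ker\varepsilon$ and left-covariant $(H,\alpha)$-Hom-FODCs.

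For (1), I would first verify that $\mathcal{N}:=H\cdot\omega_{\Omega^1(H)}(\mathcal{R})$ is a Hom-subbimodule of $(\Omega^1(H),\beta)$. Closure under the left action is immediate from $g'\cdot(g\cdot\omega(h)) = (\alpha^{-1}(g')g)\cdot\omega(\alpha(h))$ together with $\alpha(\mathcal{R})\subseteq\mathcal{R}$. For the right action I invoke $(g'\cdot\omega(r))\cdot g = (g'g_1)\cdot\omega(\bar r g_2)$: since $r\in\mathcal{R}\subset\ker\varepsilon$ one has $\bar r = r$, and $rg_2\in\mathcal{R}$ because $\mathcal{R}$ is a right Hom-ideal of $H$. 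The quotient $(\Gamma,\gamma) := (\Omega^1(H)/\mathcal{N},\bar\beta)$ therefore inherits a differentiation $\bar d$ satisfying the Leibniz rule, so it is an $(H,\alpha)$-Hom-FODC. Left-covariance is obtained via Proposition \ref{left-covariance-of-a-Hom-FODC}: the coaction $\phi$ on $\Omega^1(H)$ descends because $\phi(\mathcal{N})\subseteq H\otimes\mathcal{N}$, which follows from $\omega_{\Omega^1(H)}(\mathcal{R})\subseteq {}^{coH}\Omega^1(H)$ and the compatibility of $\phi$ with the Hom-bimodule structure. Finally, using the vector-space identification $\Omega^1(H)\cong H\otimes\ker\varepsilon$ and the fact that ${}^{coH}\Omega^1(H) = \omega_{\Omega^1(H)}(\ker\varepsilon)$, one shows $\mathcal{N}\cap\omega_{\Omega^1(H)}(\ker\varepsilon) = \omega_{\Omega^1(H)}(\mathcal{R})$; hence $\omega_\Gamma(h) = \overline{\omega_{\Omega^1(H)}(h)}$ vanishes iff $h\in\mathcal{R}$, yielding $\mathcal{R}_\Gamma = \mathcal{R}$.

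For (2), given a left-covariant $(\Gamma,\gamma)$, formula $\omega(h)\lhd g = \omega(\bar h g)$ shows that for $h\in\mathcal{R}_\Gamma$ (so $\bar h = h$) and any $g\in H$ one has $\omega(hg)=0$, and since $\varepsilon(hg)=0$ automatically we conclude $hg\in\mathcal{R}_\Gamma$; together with $\alpha$-stability (from $\omega_\Gamma\circ\alpha=\gamma\circ\omega_\Gamma$) this makes $\mathcal{R}_\Gamma$ a right Hom-ideal. The universality of $(\Omega^1(H),\beta)$ established in the preceding example produces a surjection $\psi:\Omega^1(H)\to\Gamma$ in $\widetilde{\mathcal{H}}(\mathcal{M}_k)$ with $\psi(h\cdot \omega_{\Omega^1(H)}(g)) = h\cdot\omega_\Gamma(g)$, and trivially $H\cdot\omega_{\Omega^1(H)}(\mathcal{R}_\Gamma)\subseteq\ker\psi$. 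For the reverse inclusion I would use the decomposition $\Gamma\cong H\otimes {}^{coH}\Gamma$ together with the short exact sequence $0\to\mathcal{R}_\Gamma\to\ker\varepsilon\to{}^{coH}\Gamma\to 0$ induced by $\omega_\Gamma$: an element $\xi = \sum_i h_i\cdot\omega_{\Omega^1(H)}(r_i)\in\ker\psi$ with $r_i\in\ker\varepsilon$ satisfies $\sum_i h_i\otimes\overline{r_i}=0$ in $H\otimes(\ker\varepsilon/\mathcal{R}_\Gamma)$, from which $\xi\in H\cdot\omega_{\Omega^1(H)}(\mathcal{R}_\Gamma)$ follows. Hence $\Gamma\cong\Omega^1(H)/H\cdot\omega_{\Omega^1(H)}(\mathcal{R}_\Gamma)$.

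The main obstacle is the structural bookkeeping in the last step of each part: proving $\mathcal{N}\cap\omega_{\Omega^1(H)}(\ker\varepsilon) = \omega_{\Omega^1(H)}(\mathcal{R})$ in (1), and the reverse inclusion $\ker\psi\subseteq H\cdot\omega_{\Omega^1(H)}(\mathcal{R}_\Gamma)$ in (2). Both hinge on the freeness of $\Omega^1(H)$ as a left $(H,\alpha)$-Hom-module on a ``basis'' indexed by $\omega_{\Omega^1(H)}(\ker\varepsilon)$, which itself is a consequence of the left-covariant Hom-bimodule structure theory referenced from \cite{Karacuha1}; once this freeness is cleanly in place the bijection $\mathcal{R}\leftrightarrow\Gamma$ falls out.
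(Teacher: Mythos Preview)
Your proposal is correct and follows essentially the same route as the paper: showing $\mathcal{N}$ is a Hom-subbimodule via the formulas for the bimodule structure, passing the left coaction $\phi$ to the quotient because $\phi(\mathcal{N})\subseteq H\otimes\mathcal{N}$, and using $\omega(h)\lhd g=\omega(\bar h g)$ to verify that $\mathcal{R}_\Gamma$ is a right Hom-ideal. You are in fact more explicit than the paper on the two points you flag as obstacles: the paper handles $\mathcal{R}_\Gamma\subseteq\mathcal{R}$ in (1) with the single line ``$\omega(h)\in\mathcal{N}=H\cdot\omega(\mathcal{R})$, that is, $h\in\mathcal{R}$'', and disposes of the isomorphism in (2) simply with ``by (1)'', whereas you spell both out via the identification $\Omega^1(H)\cong H\otimes\ker\varepsilon$ and the resulting freeness.
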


\begin{proof}\begin{enumerate}
\item For any $h\in \mathcal{R}$ and $g\in H$, we have
\begin{eqnarray*}\omega(h)\cdot g&=&(1g_1)\cdot (\beta^{-1}(\omega(h))\lhd g_2)=\alpha(g_1)\cdot(\omega(\alpha^{-1}(h))\lhd g_2)\\
&=&\alpha(g_1)\cdot\omega(\alpha^{-1}(\bar{h})g_2)=\alpha(g_1)\cdot\omega(\alpha^{-1}(h)g_2),
\end{eqnarray*}
which is in $H\cdot \omega_{\Omega^1(H)}(\mathcal{R})$, and hence $\mathcal{N}=H\cdot \omega_{\Omega^1(H)}(\mathcal{R})$ is an $(H,\alpha)$-Hom-subbimodule of $\Omega^1(H)=H\cdot \omega_{\Omega^1(H)}(H)$. So, $(\Gamma=\Omega^1(H)/\mathcal{N},\bar{\beta})$ is a $(H,\alpha)$-Hom-FODC with differentiation $\bar{d}:H\to \Gamma, \: h\mapsto \bar{d}h=\pi(dh)=h_1\cdot \omega(h_2)+\mathcal{N}$, where $\pi:\Omega^1(H)\to \Omega^1(H)/\mathcal{N}$ is the natural projection.

Let $\phi:\Omega^1(H)\to H\otimes \Omega^1(H),\: h\cdot \omega(g)\mapsto \alpha(h_1)\otimes h_2\cdot \omega(\alpha^{-1}(g))$ be the Hom-coaction for the left-covariant Hom-FODC $(\Omega^1(H),\beta)$. Since, for $h\cdot \omega(r)\in \mathcal{N}$ we have
$$\phi(h\cdot \omega(r))=\alpha(h_1)\otimes h_2\cdot \omega(\alpha^{-1}(r))\: \in  H\otimes\mathcal{N},$$
that is, $\phi(\mathcal{N})\subseteq H\otimes\mathcal{N}$, $\phi$ passes to a left Hom-action of $(H,\alpha)$ on $(\Gamma,\bar{\beta})$ as $\bar{\phi}(h\cdot\omega(g)+\mathcal{N})=\alpha(h_1)\otimes( h_2\cdot \omega(\alpha^{-1}(g))+\mathcal{N})$. For $g,h\in H$, we get

\begin{eqnarray*}\lefteqn{\Delta(g)(id \otimes \bar{d})(\Delta(h))}\hspace{4em}\\
&=&(g_1\otimes g_2)(h_1\otimes \bar{d}h_2)\\
&=&g_1h_1\otimes g_2\cdot(h_{21}\cdot\omega(h_{22})+\mathcal{N})=g_1h_1\otimes( g_2\cdot(h_{21}\cdot\omega(h_{22}))+\mathcal{N})\\
&=&g_1h_1\otimes( (\alpha^{-1}(g_2)h_{21})\cdot\omega(\alpha(h_{22}))+\mathcal{N})\\
&=&g_1\alpha(h_{11})\otimes( (\alpha^{-1}(g_2)h_{12})\cdot\omega(h_2)+\mathcal{N})=\bar{\phi}(\alpha^{-1}(g)h_{1}\cdot\omega(\alpha(h_2))+\mathcal{N})\\
&=&\bar{\phi}(g\cdot(h_{1}\cdot\omega(h_2))+\mathcal{N})=\bar{\phi}(g\cdot\bar{d}h),
\end{eqnarray*}
proving the left-covariance of $(\Gamma,\bar{\beta})$ with respect to $(H,\alpha)$. Thus, we have the projection $\overline{P}_L:\Gamma\to\: ^{coH}\Gamma$ given by
$$\overline{P}_L(h\cdot\omega(g)+\mathcal{N})=\varepsilon(h)\omega(\alpha(g))+\mathcal{N}$$
for $h\cdot\omega(g)\in \Omega^1(H)$.

For $h\in \mathcal{R}$, $$\omega_{\Gamma}(h)=\overline{P}_L(\bar{d}h)=\overline{P}_L(h_1\cdot\omega(h_2)+\mathcal{N})=\varepsilon(h_1)\omega(\alpha(h_2))+\mathcal{N}=\omega(h)+\mathcal{N}
=\mathcal{N}=0_{\Gamma},$$
implying that $\mathcal{R}\subseteq \mathcal{R}_{\Gamma}$. On the contrary, if $\omega_{\Gamma}(h)=0_{\Gamma}$ for some $h\in ker\varepsilon$, then $\omega(h)\in \mathcal{N}=H\cdot \omega(\mathcal{R})$, that is, $h\in \mathcal{R}$, i.e., $\mathcal{R}_{\Gamma}\subseteq \mathcal{R}$. Therefore, $\mathcal{R}=\mathcal{R}_{\Gamma}$.

\item Since $(\Gamma,\gamma)$ is a left-covariant Hom-FODC, $\widetilde{ad}_R(g)(\omega(h))=\omega(\bar{h}g)$ holds for $g,h\in H$. Hence, for $h\in \mathcal{R}_{\Gamma}$ and $g\in ker\varepsilon$, we have $\omega_{\Gamma}(hg)=\omega_{\Gamma}(\bar{h}g)=\widetilde{ad}_R(g)(\omega_{\Gamma}(h))=0$ since $\omega_{\Gamma}(h)=0$. Therefore, $\mathcal{R}_{\Gamma}$ is a subobject of $ker\varepsilon$ which is a right Hom-ideal of $(H,\alpha)$. Thus, $\Gamma\simeq \Omega^1(H)/H\cdot \omega_{\Omega^1(H)}(\mathcal{R}_{\Gamma})$ by (1).
\end{enumerate}
\end{proof}

\subsection{Quantum Hom-Tangent Space}
In the theory of Lie groups, if $A=C^{\infty}(G)$ is the algebra of smooth functions on a Lie group $G$ and $\mathcal{R}$ is the ideal of $A$ consisting of all functions vanishing with first derivatives at the neutral element of $G$, then the vector space of all linear functionals on $A$ annihilating $1$ of $A$ and $\mathcal{R}$ is identified with the tangent space at the neutral element, i.e., with the Lie algebra of $G$. In the theory of quantum groups, this consideration gives rise to the notion of quantum tangent space associated to a left-covariant FODC $\Gamma$ on a Hopf algebra $A$, which is defined as the vector space

$$\mathcal{T}_{\Gamma}=\{X\in A'|\:X(1)=0,\: X(a)=0,\forall a\in \mathcal{R}_{\Gamma}\}, $$

where $\mathcal{R}_{\Gamma}=\{a\in ker\varepsilon_A|\:P_L(da)=0\}$. In what follows, we study the Hom-version of the quantum tangent space.

We recall that the dual monoidal Hom-algebra $(H',\bar{\alpha})$ of $(H,\alpha)$ consists of functionals $X:H\to k$ and is equipped with the convolution product  $(XY)(h)=X(h_1)Y(h_2)$, for $X,Y\in H'$ and $h\in H$, as Hom-multiplication and with the Hom-unit $\varepsilon:H\to k,$ where automorphism $\bar{\alpha}:H'\to H'$ is given by $\bar{\alpha}(X)=X\circ \alpha^{-1}$. The morphism
$$H'\otimes H\to H, X\otimes h\mapsto X\bullet h:=\alpha^{2}(h_1)X(\alpha(h_2)),$$
in $\widetilde{\mathcal{H}}(\mathcal{M}_k)$, makes $(H,\alpha)$ a left $(H',\bar{\alpha})$-Hom-module.

\begin{definition}Let $(\Gamma, \gamma)$ be a left-covariant $(H,\alpha)$-Hom-FODC. Then the subobject
\begin{equation}\mathcal{T}_{\Gamma}=\{X\in H'|\:X(1)=0,\: X(h)=0,\forall h\in \mathcal{R}_{\Gamma}\} \end{equation}
of $(H',\bar{\alpha})$, in $\widetilde{\mathcal{H}}(\mathcal{M}_k)$, is said to be the {\it quantum Hom-tangent space} to $(\Gamma, \gamma)$.
\end{definition}

\begin{proposition}\label{bilinear-form}Let $(\Gamma, \gamma)$ be a left-covariant $(H,\alpha)$-Hom-FODC and $(\mathcal{T}_{\Gamma},\bar{\alpha}')$ be the quantum Hom-tangent space to it, where $\bar{\alpha}'=\bar{\alpha}|_{\mathcal{T}_{\Gamma}}$. Then, there is a unique bilinear form $<\cdot,\cdot>:\mathcal{T}_{\Gamma} \times \Gamma \to k$ in $\widetilde{\mathcal{H}}(\mathcal{M}_k)$ such that

  \begin{equation} <X,h\cdot dg>=\varepsilon(h)X(g),\:\forall g,h\in H,\:X\in \mathcal{T}_{\Gamma}.\end{equation}

With respect to this bilinear form, $(\mathcal{T}_{\Gamma},\bar{\alpha}')$ and $(\:^{coH}\Gamma,\gamma')=(\omega(H),\gamma')$ form a nondegenerate dual pairing, where $\gamma'=\gamma|_{^{coH}\Gamma}$. Moreover, we have
 \begin{equation}\label{bil-form-wrt-omega} <X,\omega(h)>=X(\alpha^{-1}(h)), \forall h\in H, X\in \mathcal{T}_{\Gamma}.\end{equation}
\end{proposition}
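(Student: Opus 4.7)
The plan is to construct the pairing by lifting to the universal Hom-FODC and then quotienting. By Proposition \ref{one-one-corresp-Hom-ideals}(2), $\Gamma\cong\Omega^1(H)/\mathcal{N}$ where $\mathcal{N}=H\cdot\omega_{\Omega^1(H)}(\mathcal{R}_\Gamma)$. Since $(\Omega^1(H),\beta)=(H\otimes ker\varepsilon,\alpha\otimes\alpha')$, each $X\in\mathcal{T}_\Gamma$ gives a linear functional $L_X:\Omega^1(H)\to k$ defined on pure tensors by $L_X(h\otimes u):=\varepsilon(h)X(u)$; this is well-defined on $H\otimes ker\varepsilon$ by bilinearity. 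Setting $\langle X,\omega\rangle:=L_X(\tilde\omega)$ for any lift $\tilde\omega\in\Omega^1(H)$ of $\omega\in\Gamma$ is our candidate bilinear form.

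Two checks then remain. First, rewriting $h\cdot dg=h\cdot(g_1\otimes\overline{g_2})=\alpha^{-1}(h)g_1\otimes\overline{\alpha(g_2)}$ via the left $(H,\alpha)$-action on $(\Omega^1(H),\beta)$ and evaluating $L_X$, I obtain $L_X(h\cdot dg)=\varepsilon(h)\varepsilon(g_1)X(\alpha(g_2))=\varepsilon(h)X(g)$, using $X(1)=0$, $\varepsilon\circ\alpha=\varepsilon$, and the Hom-counit identity $\varepsilon(g_1)\alpha(g_2)=g$. Second, for a generator $h\cdot\omega(r)=\alpha^{-1}(h)\otimes\overline{r}$ of $\mathcal{N}$ with $r\in\mathcal{R}_\Gamma$, I get $L_X(h\cdot\omega(r))=\varepsilon(h)X(r)=0$ since $X\in\mathcal{T}_\Gamma$ vanishes on $\mathcal{R}_\Gamma$. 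Hence $L_X(\mathcal{N})=0$ and the functional descends to $\langle X,\cdot\rangle:\Gamma\to k$ with the asserted value on $h\cdot dg$. Uniqueness follows from $\Gamma=H\cdot dH$, while the Hom-compatibility $\langle\bar\alpha(X),\gamma(\omega)\rangle=\langle X,\omega\rangle$ is immediate from the defining equation together with $\bar\alpha(X)=X\circ\alpha^{-1}$ and $\varepsilon\circ\alpha=\varepsilon$.

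For formula (\ref{bil-form-wrt-omega}), I would plug (\ref{omega-for-Hom-FODC}) into the defining identity: $\langle X,\omega(h)\rangle=\langle X,S(h_1)\cdot dh_2\rangle=\varepsilon(S(h_1))X(h_2)=\varepsilon(h_1)X(h_2)=X(\varepsilon(h_1)h_2)=X(\alpha^{-1}(h))$, using $\varepsilon\circ S=\varepsilon$ and the Hom-counit identity once more. With this explicit form, nondegeneracy is purely linear-algebraic. If $X(\alpha^{-1}(h))=0$ for every $h\in H$, bijectivity of $\alpha$ forces $X=0$. Conversely, by definition $\mathcal{T}_\Gamma$ is the annihilator in $H'$ of $\mathcal{R}_\Gamma\oplus k\cdot 1$, and the standard double-annihilator equality for subspaces of a vector space gives $(\mathcal{T}_\Gamma)^\perp=\mathcal{R}_\Gamma\oplus k\cdot 1$ in $H$. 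Hence $X(\alpha^{-1}(h))=0$ for all $X\in\mathcal{T}_\Gamma$ forces $\alpha^{-1}(h)\in\mathcal{R}_\Gamma\oplus k\cdot 1$; by $\alpha$-invariance of each summand this gives $h\in\mathcal{R}_\Gamma\oplus k\cdot 1$, which, since $\omega(1)=0$ and $\omega|_{ker\varepsilon}$ has kernel exactly $\mathcal{R}_\Gamma$, is equivalent to $\omega(h)=0$.

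The main obstacle I anticipate is the first of the two checks above—carefully translating $h\cdot dg$ and $h\cdot\omega(r)$ back to the explicit tensor form in $H\otimes ker\varepsilon$ so that the definition $L_X(h\otimes u)=\varepsilon(h)X(u)$ can be applied directly. Everything else is either a formal manipulation of the defining identity, a consequence of the Hom-counit and antipode axioms, or elementary vector-space duality.
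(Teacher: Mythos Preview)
Your argument is correct. The difference from the paper lies only in how well-definedness of the pairing is organized. You pass through the explicit model $\Omega^{1}(H)=H\otimes\ker\varepsilon$ and the quotient isomorphism of Proposition~\ref{one-one-corresp-Hom-ideals}(2): the functional $L_X$ is manifestly well-defined on tensors, so the only work is to check vanishing on the generators $h\cdot\omega_{\Omega^{1}(H)}(r)$ of $\mathcal{N}$. The paper instead stays inside $\Gamma$: given $\sum_i h_i\cdot dg_i=0$ it applies $P_L\circ\gamma^{-1}$ to deduce $\omega(\sum_i\varepsilon(h_i)g_i)=0$, hence $\sum_i\varepsilon(h_i)\overline{g_i}\in\mathcal{R}_\Gamma$, and then uses $X(1)=0$ and $X|_{\mathcal{R}_\Gamma}=0$ exactly as you do. Your route is slightly more structural (it reuses the universal-calculus description already set up) while the paper's is self-contained and avoids invoking the quotient isomorphism; the remaining steps---uniqueness from $\Gamma=H\cdot dH$, Hom-compatibility, the formula $\langle X,\omega(h)\rangle=X(\alpha^{-1}(h))$, and nondegeneracy via the ``extend to a basis''/double-annihilator argument---are essentially identical in both.
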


\begin{proof} We define $<X,\varrho>:=X(\sum_i \varepsilon(h_i)g_i)=\sum_i \varepsilon(h_i)X(g_i)$ for $X\in H'$ and $\varrho=\sum_ih_i\cdot dg_i \in \Gamma$. Suppose that $\varrho=\sum_ih_i\cdot dg_i=0$. Then
\begin{eqnarray*}0&=&P_L(\gamma^{-1}(\rho))=\sum_iP_L(\alpha^{-1}(h_i)d(\alpha^{-1}(g_i)))\\
&=&\sum_i\varepsilon(\alpha^{-1}(h_i))\gamma(P_L(d(\alpha^{-1}(g_i))))\\
&=&\sum_i\varepsilon(h_i)\gamma(\omega(\alpha^{-1}(g_i)))\\
&=&\omega\left(\sum_i\varepsilon(h_i)g_i\right),
\end{eqnarray*}
hence $\omega(\sum_i\varepsilon(h_i)\overline{g_i})=0$, which implies that $\sum_i\varepsilon(h_i)\overline{g_i}\in \mathcal{R}_{\Gamma}$. Thus, by the definition of $\mathcal{T}_{\Gamma}$ we get
\begin{eqnarray*}<X,\rho>&=&X\left(\sum_i \varepsilon(h_i)g_i\right)=X\left(\sum_i (\varepsilon(h_i)\overline{g_i}+\varepsilon(h_i)\varepsilon(g_i)1)\right)\\
&=&X\left(\sum_i \varepsilon(h_i)\overline{g_i}\right)+\sum_i\varepsilon(h_i)\varepsilon(g_i)X(1)=0,
\end{eqnarray*}
which proves that the bilinear form $<\cdot,\cdot>$ is well-defined. Uniqueness comes immediately from the fact that $\Gamma=H\cdot dH$. Since

\begin{eqnarray*}<\bar{\alpha}(X),\gamma(\varrho)>&=&(X\circ \alpha^{-1})\left(\sum_i \varepsilon(\alpha(h_i))\alpha(g_i)\right)=\sum_i \varepsilon(\alpha(h_i))(X\circ \alpha^{-1})(\alpha(g_i))\\
&=&X\left(\sum_i \varepsilon(h_i)g_i\right)=<X,\varrho>,
\end{eqnarray*}
the bilinear form $<\cdot,\cdot>$ is in $\widetilde{\mathcal{H}}(\mathcal{M}_k)$.
For any $h\in H$, $<X,\omega(h)>=<X,S(h_1)\cdot dh_2>=\varepsilon(h_1)X(h_2)=X(\varepsilon(h_1)h_2)=X(\alpha^{-1}(h))$, which is the formula (\ref{bil-form-wrt-omega}). For any $h\in ker\varepsilon$, if $<X,\omega(h)>=X(\alpha^{-1}(h))=0,\: \forall X \in \mathcal{T}_{\Gamma}$, then $\alpha^{-1}(h)\in \mathcal{R}_{\Gamma}$: Suppose that the element $0\neq \alpha^{-1}(h)\in ker\varepsilon$ is not contained in $\mathcal{R}_{\Gamma}$. Then we can extend $\alpha^{-1}(h)$ to a basis of $ker\varepsilon$ and find a functional $X\in \mathcal{T}_{\Gamma}$ such that $X(\alpha^{-1}(h))\neq0$, which contradicts with the hypothesis of the statement. So we have $h\in \mathcal{R}_{\Gamma}$ since $\omega\circ\alpha^{-1}=\gamma^{-1}\circ\omega$. On the other hand $<X,\omega(h)>=X(\alpha^{-1}(h))=\bar{\alpha}(X)(h)=0$ for all $\omega(h)\in \omega(H)$ implies $\bar{\alpha}(X)=0$, that is, $X=0$. Hence, $(\mathcal{T}_{\Gamma},\bar{\alpha}')$ and $(\:^{coH}\Gamma,\gamma')=(\omega(H),\gamma')$ form a nondegenerate dual pairing with respect to $<\cdot,\cdot >$.
\end{proof}

Let $\{X_i\}_{i\in I}$ be a linear basis of $\mathcal{T}_{\Gamma}$ and $\{\omega_i\}_{i\in I}$ be the dual basis of $^{coH}\Gamma$, that is, $<X_i,\omega_j>=\delta_{ij}$ for $i,j \in I$. Also, from Theorem (4.17) in \cite{Karacuha1}, recall the family of functionals $\{f^{i}_j\}_{i,j\in I}$ in the definition of the Hom-action $^{coH}\Gamma\otimes H\to  \:^{coH}\Gamma, \omega_i\otimes h \mapsto \omega_i\lhd h=f^{i}_j(h)\omega_j$, where all but finitely many $f^{i}_j(h)$ vanish and Einstein summation convention is used. These functionals satisfy, for all $h,g\in H$ and $i,j\in I$,
$$f^{i}_j(hg)=(\bar{\gamma}^{i}_kf^k_l)(h)f^l_j(\alpha(g)),\:\: f^{i}_j(1)=\gamma^{i}_j,$$
where $\gamma'(\omega_i)=\gamma^{i}_j\omega_j$ and $\gamma'^{-1}(\omega_{i})=\bar{\gamma}^{i}_j\omega_j$ such that $\gamma^{i}_j\bar{\gamma}^{j}_k=\delta_{ik}=\bar{\gamma}^{i}_j\gamma^{j}_k$.

\begin{proposition}For $h,g\in H$, we have
\begin{equation}\label{derivation-by-basis-of-tangent-space}dh=(X_i\bullet \alpha^{-2}(h))\cdot\omega_i,\end{equation}
\begin{equation}\label{action-of-tangents-on-product}X_i(hg)=\varepsilon(h)(\gamma_i^{j}X_j)(g)+X_k(h)(\gamma_i^l\bar{f}_l^k)(g),\end{equation}
where $\bar{f}^k_l=\bar{\gamma}^k_pf^p_l$.
\end{proposition}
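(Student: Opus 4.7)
The plan is to use the nondegenerate pairing $\langle\,\cdot\,,\,\cdot\,\rangle$ from Proposition \ref{bilinear-form} to expand $dh$ in the free $H$-Hom-module basis $\{\omega_i\}$ of $\Gamma$ and then combine this expansion with the Leibniz rule for $d$.

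For (\ref{derivation-by-basis-of-tangent-space}) I would start from $dh = h_1\cdot\omega(h_2)$, which is (\ref{derivation-by-omega}). Since $\{X_i\}$ and $\{\omega_i\}$ are dual bases and $\langle X,\omega(k)\rangle = X(\alpha^{-1}(k))$ by (\ref{bil-form-wrt-omega}), any element $\omega(k)$ admits the (finite) expansion $\omega(k) = X_i(\alpha^{-1}(k))\omega_i$ inside $\omega(H) = {}^{coH}\Gamma$. Substituting $k = h_2$ yields $dh = X_i(\alpha^{-1}(h_2))\,h_1\cdot\omega_i$. On the other hand, Hom-coassociativity $\Delta(\alpha^{-2}(h)) = \alpha^{-2}(h_1)\otimes\alpha^{-2}(h_2)$ gives
$$X_i\bullet\alpha^{-2}(h) = \alpha^2(\alpha^{-2}(h_1))\,X_i(\alpha(\alpha^{-2}(h_2))) = h_1\,X_i(\alpha^{-1}(h_2)),$$
matching the coefficient above.

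For (\ref{action-of-tangents-on-product}) I would apply the Leibniz rule to $d(hg)$ and expand both sides using (\ref{derivation-by-basis-of-tangent-space}):
$$(X_i\bullet\alpha^{-2}(hg))\cdot\omega_i = h\cdot dg + dh\cdot g.$$
Using (\ref{Hom-bimod-structure1}), (\ref{Hom-bimod-structure2}) and the right action $\omega_j\lhd g = f^j_k(g)\omega_k$, both terms on the right become sums of the form (element of $H$)$\cdot\omega_k$. Since the $\omega_k$ form a free left $(H,\alpha)$-Hom-module basis of $\Gamma$, coefficients in $H$ can be equated. Applying $\varepsilon$ and using the counit axiom $\varepsilon(h_1)h_2 = \alpha^{-1}(h)$ together with its consequence $\varepsilon(g_1)f^j_k(g_2) = f^j_k(\alpha^{-1}(g))$ reduces the identity to
$$X_k(\alpha^{-2}(hg)) = \varepsilon(h)\gamma^j_k X_j(\alpha^{-2}(g)) + X_j(\alpha^{-2}(h))\,f^j_k(\alpha^{-1}(g)).$$
Replacing $h\mapsto\alpha^2(h)$ and $g\mapsto\alpha^2(g)$, and invoking the identity $f^l_i(\alpha(g)) = \bar\gamma^l_p\gamma^q_i f^p_q(g)$ --- obtained from $f^i_j(hg) = (\bar\gamma^i_k f^k_l)(h)f^l_j(\alpha(g))$ by setting $g = 1$ and using $h\cdot 1 = \alpha(h)$ --- together with $\bar f_l^k = \bar\gamma^k_p f^p_l$, one converts the second term into $X_k(h)(\gamma_i^l\bar f_l^k)(g)$, yielding (\ref{action-of-tangents-on-product}).

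The main obstacle is the index bookkeeping in the second formula: the four families $\gamma^i_j, \bar\gamma^i_j, f^i_j, \bar f^i_j$ must be manipulated simultaneously using the inverse relation $\gamma^i_j\bar\gamma^j_k = \delta_{ik}$ and the conjugation identity for $f^i_j(\alpha(h))$ above, while tracking how the various $\alpha$-shifts migrate under the substitutions $h\mapsto\alpha^2(h)$, $g\mapsto\alpha^2(g)$. A minor technicality is that the index set $I$ may be infinite, but this is harmless because for each fixed $h$ the element $\omega(h)\in\omega(H)$ lies in the $k$-linear span of finitely many $\omega_i$, so the dual-basis expansions used above are all finite sums.
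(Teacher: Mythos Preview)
Your proposal is correct and follows essentially the same route as the paper: expand $\omega(h)$ in the dual basis via (\ref{bil-form-wrt-omega}) to obtain (\ref{derivation-by-basis-of-tangent-space}), then apply Leibniz, match coefficients against the free Hom-module basis $\{\omega_l\}$, and hit the resulting identity in $H$ with $\varepsilon$. The only cosmetic difference is that the paper keeps the second term in the form $(\gamma^k_l\bar f^j_k)(\alpha^{-2}(g))$ throughout and substitutes $h\mapsto\alpha^2(h)$, $g\mapsto\alpha^2(g)$ at the level of the $\bullet$-identity in $H$ before applying $\varepsilon$, whereas you first collapse that term to $f^j_l(\alpha^{-1}(g))$ using the conjugation identity $f^i_j(\alpha(h))=\bar\gamma^i_k\gamma^l_j f^k_l(h)$ and then re-expand after the substitution; the two manipulations are equivalent.
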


\begin{proof} By the formula \ref{bil-form-wrt-omega}, we have $<X_i,\omega(h)>=X_i(\alpha^{-1}(h))$ implying $\omega(h)=X_i(\alpha^{-1}(h))\omega_i$. Thus,
$dh=h_1\cdot\omega(h_2)=h_1\cdot(X_i(\alpha^{-1}(h_2)\omega_i))=(X_i\bullet \alpha^{-2}(h))\cdot\omega_i$ which is the formula \ref{derivation-by-basis-of-tangent-space}.
By using this formula and the Leibniz rule, we obtain
\begin{eqnarray*}(X_l\bullet \alpha^{-2}(hg))\cdot\omega_l&=&d(hg)=dh\cdot g+h\cdot dg\\
&=&((X_j\bullet \alpha^{-2}(h))\cdot\omega_j)\cdot g+h\cdot((X_i\bullet \alpha^{-2}(g))\cdot\omega_i)\\
&=&\alpha(X_j\bullet \alpha^{-2}(h))\cdot (\omega_j\cdot\alpha^{-1}(g))+(\alpha^{-1}(h) (X_i\bullet \alpha^{-2}(g)))\cdot\gamma'(\omega_i)\\
&=&\alpha(X_j\bullet \alpha^{-2}(h))\cdot((\bar{f}^{j}_k\bullet\alpha^{-2}(g))\cdot\omega_k)+(\alpha^{-1}(h)(X_i\bullet \alpha^{-2}(g)))\cdot(\gamma^{i}_k\omega_k)\\
&=&((X_j\bullet \alpha^{-2}(h))(\bar{f}^{j}_k\bullet\alpha^{-2}(g)))\cdot(\gamma^k_l\omega_l)+(\alpha^{-1}(h)(X_i\bullet \alpha^{-2}(g)))\cdot(\gamma^{i}_l\omega_l)\\
&=&[(X_j\bullet \alpha^{-2}(h))((\gamma^k_l\bar{f}^{j}_k)\bullet\alpha^{-2}(g))+\alpha^{-1}(h)((\gamma^{i}_lX_i)\bullet \alpha^{-2}(g))]\cdot\omega_l,
\end{eqnarray*}
hence, by replacing $\alpha^{-2}(h)$ and $\alpha^{-2}(g)$ by $h$ and $g$, respectively, we get
$$X_l\bullet(hg)=\alpha(h)((\gamma^{i}_lX_i)\bullet g)+(X_j\bullet h)((\gamma^k_l\bar{f}^{j}_k)\bullet g), l\in I.$$
By applying $\varepsilon$ to the both sides of this equation we acquire
 $$X_l(hg)=\varepsilon(h)(\gamma_i^lX_i)(g)+X_j(h)(\gamma_k^l\bar{f}_j^k)(g),$$
since, for any $h\in H$ and $f\in H'$, the equality $\varepsilon(f\bullet h)=\varepsilon(\alpha^{2}(h_1))f(\alpha(h_2))=\varepsilon(h_1)f(\alpha(h_2))=f(\alpha(\varepsilon(h_1)h_2))=f(h)$ holds.
\end{proof}

Let $(A,\alpha)$ be a monoidal Hom-algebra. Then we consider $A'\otimes A'$, where $A'=Hom(A,k)$, as a linear subspace of $(A\otimes A)'$ by identifying $f\otimes g\in A'\otimes A'$ with the linear functional on $A\otimes A$ specified by $(f\otimes g)(a\otimes a'):=f(a)g(a')$ for $a,a'\in A$. For $f\in H'$, let us define $\Delta(f)\in (A\otimes A)'$ by $\Delta(f)(a\otimes b):=f(ab)$ for $a,b\in A$. We now denote, by $A^{\circ}$, the set of all functionals $f\in A'$ such that $\Delta(f)\in A'\otimes A'$, i.e., it is written as a finite sum $$\Delta(f)=\sum_{p=1}^{P}f_p\otimes g_p$$ for some functionals $f_p,g_p\in A',\: p=1,...,P$, where $P$ is a natural number so that we have $f(ab)=\sum_pf_p(a)g_p(b)$. Then $(A^{\circ},\alpha^{\circ})$ is a monoidal Hom-coalgebra with Hom-comultiplication given above and the Hom-counit is defined by $\varepsilon(f)=f(1_A)$, where $\alpha^{\circ}(f)=f\circ\alpha^{-1}$ for any $f\in A^{\circ}$: Let $f\in A^{\circ}$ and $\Delta(f)=\sum_pf_p\otimes g_p$ such that the functionals $\{f_p\}_{p=1}^{P}$ are chosen to be linearly independent. So, one can find $a_q\in A$ such that $f_p(a_q)=\delta_{pq}$. Thus we get
\begin{eqnarray*}g_q(ab)&=&\sum_p\delta_{qp}g_p(ab)=\sum_pf_p(a_q)g_p(ab)=f(a_q(ab))\\
&=&f((\alpha^{-1}(a_q)a)\alpha(b))=\sum_pf_p(\alpha^{-1}(a_q)a)g_p(\alpha(b)),
\end{eqnarray*}
showing that $g_q\in A^{\circ}$, and analogously $f_q\in A^{\circ}$, and hence $\Delta(f)\in A^{\circ}\otimes A^{\circ}$. Let $f\in A^{\circ}$ and $a,b,c\in A$. Then we have the Hom-coassociativity of $\Delta$:
$$(\bar{\alpha}^{-1}\otimes \Delta)(\Delta(f))(a\otimes b \otimes c)=f(\alpha(a)(bc))=f((ab)\alpha(c))=(\Delta\otimes\bar{\alpha}^{-1})(\Delta(f))(a\otimes b \otimes c).$$
On the other hand, $$(id \otimes \varepsilon)(\Delta(f))(h)=\left(\sum_p\bar{\alpha}(f_p)g_p(1_A)\right)(h)=\sum_pf_p(\alpha^{-1}(h))g_p(1_A)=f(h)$$
shows that Hom-counity is satisfied.

Suppose that $(A,\alpha)$ is a monoidal Hom-bialgebra, then the monoidal Hom-coalgebra $(A^{\circ},\alpha^{\circ})$ endowed with the convolution product, as in the argument before Lemma (4.16) in \cite{Karacuha1}, is as well a monoidal Hom-bialgebra with the Hom-unit given by the Hom-counit $\varepsilon$ of the monoidal Hom-coalgebra $(A,\alpha)$: One can easily check the compatibility condition between Hom-comultiplication and Hom-multiplication of $(A^{\circ},\alpha^{\circ})$ which follows from that of $(A,\alpha)$. So, it suffices to verify that for any $f,g\in A^{\circ} $, $fg$ is also in $A^{\circ}$: If we put $\Delta(f)=\sum_pf_p\otimes g_p$ and $\Delta(g)=\sum_qh_q\otimes k_q$, then we get
$$(fg)(ab)=\Delta(fg)(a\otimes b)=\sum_{p,q}f_ph_q(a) g_pk_q(b)=\left(\sum_{p,q}f_ph_q\otimes g_pk_q\right)(a\otimes b),$$
so that $fg\in A^{\circ}$.

If $(A,\alpha)$ is a monoidal Hom-Hopf algebra, then so is $(A^{\circ},\alpha^{\circ})$ with antipode defined by $S(f)(a)=f(S(a))$ for $f\in A^{\circ}$ and $a\in A$: Set $\Delta(f)=\sum_pf_p\otimes g_p$, and then we obtain
$$\Delta(S(f))(a\otimes b)=S(f)(ab)=f(S(ab))=\sum_pS(f_p)(b) S(g_p)(a)=\left(\sum_p S(g_p)\otimes S(f_p)\right)(a\otimes b),$$
implying $S(f)\in A^{\circ}$. Lastly, for $a\in A$, we have
$$((m(S\otimes id)\Delta)(f))(a)=\sum_p(S(f_p)g_p)(a)=\varepsilon(a)f(1)=1_{A^{\circ}}(a)\varepsilon_{A^{\circ}}(f)=((\eta\circ\varepsilon)(f))(a),$$
similarly we get $((m(id\otimes S)\Delta)(f))(a)=((\eta\circ\varepsilon)(f))(a)$.

We then call the monoidal Hom-coalgebra (respectively, Hom-bialgebra, Hom-Hopf algebra) $A^{\circ}$ above the {\it dual monoidal Hom-coalgebra} (respectively, {\it Hom-bialgebra}, {\it Hom-Hopf algebra}). Suppose now that the vector space $\mathcal{T}_{\Gamma}$ is finite dimensional. Then we assert from Theorem (4.17) in \cite{Karacuha1} and (\ref{action-of-tangents-on-product}) that the functionals $f^{i}_j$ and $X_l$ are in the dual monoidal Hom-Hopf algebra $H^{\circ}$ and we have the following equations, where there is summation over repeating indices,

\begin{equation}\Delta(f^{i}_j)=\bar{f}^{i}_l\otimes f^l_j\circ\alpha,\end{equation}
\begin{equation}\Delta(X_l)=X_j\otimes\gamma^k_l\bar{f}^j_k +\varepsilon\otimes \gamma^l_{i}X_i\end{equation}
in $H^{\circ}$.

\section{Bicovariant FODC over Monoidal Hom-Hopf Algebras}

\subsection{Right-Covariant Hom-FODC }

\begin{definition}\label{righ-covariant-Hom-FODC}Let $(H,\beta)$ be a monoidal Hom-bialgebra. A FODC $(\Gamma,\gamma)$ over a right Hom-quantum space $(A,\alpha)$ with right Hom-coaction $\varphi:A\to A\otimes H,\: a\mapsto a_{[0]}\otimes a_{[1]}$ is called {\it right-covariant} with respect to $(H,\beta)$ if there exists a right Hom-coaction $\phi:\Gamma\to \Gamma\otimes H,\: \omega\mapsto \omega_{[0]}\otimes \omega_{[1]}$ of $(H,\beta)$ on $(\Gamma,\gamma)$ such that
\begin{enumerate}
\item $\phi(\alpha(a)\cdot(\omega\cdot b))=\varphi(\alpha(a))(\phi(\omega)\varphi(b))[=(\varphi(a)\phi(\omega))\varphi(\alpha(b))=\phi(a\cdot\omega)\cdot b]$, $\forall a,b \in A$, $\omega\in \Gamma$,
\item $\phi(da)=(d\otimes id)(\varphi(a)) $, $\forall a\in A$
\end{enumerate}
\end{definition}
Let $(H,\alpha)$ be a monoidal Hom-Hopf algebra with an invertible antipode $S$. Since $(H,\alpha)$ is a right Hom-quantum space for itself with respect to the Hom-comultiplication $\Delta:H\to H\otimes H, h\mapsto h_1\otimes h_2$, the above definition induces the following definition.

\begin{definition}\label{righ-covariant-Hom-FODC1}A $(H,\alpha)$-Hom-FODC $(\Gamma,\gamma)$ is said to be {\it right-covariant} if $(\Gamma,\gamma)$ is a right-covariant FODC over the right Hom-quantum space $(H,\alpha)$ with right Hom-coaction $\varphi=\Delta$ in the above definition, or in an equivalent way if there is a morphism $\phi:\Gamma\to \Gamma\otimes H $ in $\widetilde{\mathcal{H}}(\mathcal{M}_k)$ such that, for $h,g\in H$,
\begin{equation}\label{right-coaction-on-Hom-FODC}\phi(h\cdot dg)=\Delta(h)(d\otimes id)(\Delta(g)).\end{equation}
\end{definition}

If we modify the Proposition \ref{left-covariance-of-a-Hom-FODC} to the right-covariant case, we conclude that the right-covariant $(H,\alpha)$-Hom-FODC $(\Gamma,\gamma)$ is a right-covariant $(H,\alpha)$-Hom-bimodule. Thus, by using the unique projection $P_R:(\Gamma,\gamma)\to (\Gamma^{coH},\gamma|_{\Gamma^{coH}}),\:P_R(\rho)=\omega_{[0]}\cdot S(\omega_{[1]}) $ we define the linear mapping

$$\eta_{\Gamma}:H\to \Gamma^{coH},\: \eta(h):=P_R(dh),$$

for any $h\in H$, in $\widetilde{\mathcal{H}}(\mathcal{M}_k)$, for which $\eta(H)\Gamma^{coH}$. Since $\phi(dh)=dh_1\otimes h_2$, we have, for $h\in H$

$$\eta(h)=dh_1\cdot S(h_2)\: and \: dh=\eta(h_1)\cdot h_2.$$

\subsection{Bicovariant Hom-FODC }

\begin{definition}A $(H,\alpha)$-Hom-FODC $(\Gamma,\gamma)$ is said to be {\it bicovariant} if it is both left-covariant and right-covariant FODC.
\end{definition}

\begin{remark} By the Remark \ref{left-covariant-Hom-FODC2} and the Definition \ref{righ-covariant-Hom-FODC1}, a $(H,\alpha)$-Hom-FODC $(\Gamma,\gamma)$ is bicovariant if and only if there exist morphisms $\phi_L:\Gamma\to H\otimes \Gamma$ and $\phi_R:\Gamma\to \Gamma\otimes H$ in $\widetilde{\mathcal{H}}(\mathcal{M}_k)$, satisfying the equations \ref{left-coaction-on-Hom-FODC} and \ref{right-coaction-on-Hom-FODC}, respectively. So, if $(\Gamma,\gamma)$ is a bicovariant $(H,\alpha)$-Hom-FODC with Hom-coactions $\phi_L$ and $\phi_R$ satisfying \ref{left-coaction-on-Hom-FODC} and \ref{right-coaction-on-Hom-FODC} we get, for $h,g\in H$,
$$(id\otimes \phi_R)(\phi_L(h\cdot dg))=(id\otimes \phi_R)(h_1g_1\otimes h_2\cdot dg_2)=h_1g_1\otimes (h_{21}\cdot dg_{21}\otimes h_{22}g_{22}),$$
\begin{eqnarray*}(\tilde{a}\circ(\phi_L\otimes id))(\phi_R(h\cdot dg))&=&(\tilde{a}\circ(\phi_L\otimes id))(h_1\cdot dg_1\otimes h_2g_2)\\
&=&\alpha(h_{11}g_{11})\otimes(h_{12}\cdot dg_{12}\otimes \alpha^{-1}(h_2g_2))\\
&=&h_1g_1\otimes (h_{21}\cdot dg_{21}\otimes h_{22}g_{22}).\end{eqnarray*}
Thus, $(\Gamma,\gamma)$ is a bicovariant $(H,\alpha)$-Hom-bimodule and the whole structure theory of bicovariant Hom-bimodules can be applied to it.
\end{remark}
\begin{lemma}Let $(H,\alpha)$ be a monoidal Hom-Hopf algebra. Then
\begin{enumerate}
\item the linear mapping $\widetilde{Ad}_R:H\to H\otimes H$ given by
$$\widetilde{Ad}_R(h)=\alpha(h_{12})\otimes S(h_{11})\alpha^{-1}(h_{2})=\alpha(h_{21})\otimes S(\alpha^{-1}(h_1))h_{22}$$
is a right Hom-coaction of $(H,\alpha)$ on itself.
\item The linear mapping $\widetilde{Ad}_L:H\to H\otimes H$ given by
$$\widetilde{Ad}_L(h)=\alpha(h_{11})S(\alpha^{-1}(h_2))\otimes \alpha(h_{12})=\alpha^{-1}(h_1)S(h_{22})\otimes \alpha(h_{21})$$
is a left Hom-coaction of $(H,\alpha)$ on itself.
$\widetilde{Ad}_R$ and $\widetilde{Ad}_L$ are called {\it adjoint right Hom-coaction} and {\it adjoint left Hom-coaction} of $(H,\alpha)$ on itself, respectively
\end{enumerate}
\end{lemma}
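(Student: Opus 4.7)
The plan is to verify, for $\widetilde{Ad}_R$ (with $\widetilde{Ad}_L$ entirely analogous by left-right symmetry), the defining conditions of a right Hom-coaction $\delta:H\to H\otimes H$: (i) equality of the two Sweedler-notation expressions given in the statement, (ii) Hom-linearity $\delta\circ\alpha=(\alpha\otimes\alpha)\circ\delta$, (iii) the Hom-counit law $(\mathrm{id}\otimes\varepsilon)\circ\delta=\alpha^{-1}$, and (iv) Hom-coassociativity in the shifted form used in the excerpt's proof of Proposition \ref{left-covariance-of-a-Hom-FODC}.

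First I would dispatch (i) by applying $\alpha\otimes m(S\otimes\mathrm{id})$ to the Hom-coassociativity identity $h_{11}\otimes h_{12}\otimes\alpha^{-1}(h_2)=\alpha^{-1}(h_1)\otimes h_{21}\otimes h_{22}$; this immediately turns the first expression for $\widetilde{Ad}_R(h)$ into the second. For (ii), I would expand $\widetilde{Ad}_R(\alpha(h))$ using $\Delta\circ\alpha=(\alpha\otimes\alpha)\circ\Delta$ applied twice to the iterated coproduct, together with the standard identity $S\circ\alpha=\alpha\circ S$ in a monoidal Hom-Hopf algebra; the result reduces directly to $(\alpha\otimes\alpha)\widetilde{Ad}_R(h)$. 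For (iii), computing $(\mathrm{id}\otimes\varepsilon)\widetilde{Ad}_R(h)=\alpha(h_{12})\,\varepsilon(S(h_{11})\alpha^{-1}(h_2))$ and invoking multiplicativity of $\varepsilon$, $\varepsilon\circ S=\varepsilon$, $\varepsilon\circ\alpha=\varepsilon$, together with the Hom-counit identity $\varepsilon(h_{11})h_{12}=\alpha^{-1}(h_1)$, collapses the expression to $h_1\varepsilon(h_2)=\alpha^{-1}(h)$, as required.

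The heart of the argument is (iv), which demands the equality
\[
\widetilde{Ad}_R(\alpha(h_{12}))\otimes \alpha^{-1}\!\bigl(S(h_{11})\alpha^{-1}(h_2)\bigr) \;=\; h_{12}\otimes \Delta\!\bigl(S(h_{11})\alpha^{-1}(h_2)\bigr).
\]
On the right-hand side I would use the anti-Hom-coalgebra-morphism identity $\Delta\circ S = (S\otimes S)\circ \Delta^{\mathrm{op}}$ followed by Hom-multiplicativity of $\Delta$, so that it becomes $h_{12}\otimes S(h_{11,2})\alpha^{-1}(h_{21})\otimes S(h_{11,1})\alpha^{-1}(h_{22})$. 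On the left-hand side, expanding $\widetilde{Ad}_R(\alpha(h_{12}))$ by Hom-linearity and rewriting $\alpha^{-1}(S(h_{11})\alpha^{-1}(h_2)) = S(\alpha^{-1}(h_{11}))\alpha^{-2}(h_2)$ produces a triple-coproduct expression in $h_{12,11},h_{12,12},h_{12,2},h_{11},h_2$. The two sides can then be matched by iteratively applying Hom-coassociativity (to both the $h_1$- and $h_2$-branches of $\Delta^{(2)}(h)$) so as to realign the Sweedler indices, carefully tracking the $\alpha^{\pm 1}$-powers introduced by each associator shift.

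The case of $\widetilde{Ad}_L$ is proved by exactly parallel manipulations, with $S$ acting on the right tensorand. The main obstacle throughout is step (iv): while the underlying statement is classical (the adjoint coaction of a Hopf algebra on itself is a coaction), transporting the proof to the monoidal Hom-setting forces meticulous bookkeeping of the $\alpha^{\pm 1}$-factors generated at each application of the associator, of Hom-coassociativity, and of the (Hom-)antimultiplicativity of $S$.
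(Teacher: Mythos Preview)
Your proposal is correct and follows essentially the same approach as the paper: the paper likewise verifies the Hom-counit law and $\widetilde{Ad}_R\circ\alpha=(\alpha\otimes\alpha)\circ\widetilde{Ad}_R$ directly, and for Hom-coassociativity it expands $\alpha^{-1}(h_{[0]})\otimes\Delta(h_{[1]})$ using $\Delta\circ S=(S\otimes S)\circ\Delta^{\mathrm{op}}$ and then applies an iterated Hom-coassociativity identity on the five-fold coproduct to reach $h_{[0][0]}\otimes h_{[0][1]}\otimes\alpha^{-1}(h_{[1]})$, exactly as you outline. The only (minor) difference is that you also explicitly verify the equality of the two Sweedler expressions for $\widetilde{Ad}_R(h)$, which the paper simply asserts.
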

\begin{proof}
\begin{enumerate}
\item  If we write $\widetilde{Ad}_R(h)=h_{[0]}\otimes h_{[1]}$ for $h\in H$, then the Hom-coassociativity follows from
\begin{eqnarray*}\alpha^{-1}(h_{[0]})\otimes \Delta(h_{[1]})&=&\alpha^{-1}(\alpha(h_{12}))\otimes \Delta(S(h_{11})\alpha^{-1}(h_{2}))\\
&=&h_{12}\otimes S(h_{112})\alpha^{-1}(h_{21})\otimes S(h_{111})\alpha^{-1}(h_{22})\\
&=&\alpha^{2}(h_{1212})\otimes S(\alpha(h_{1211}))h_{122}\otimes S(\alpha^{-1}(h_{11}))\alpha^{-2}(h_{22})\\
&=&\alpha^{2}(h_{1212})\otimes S(\alpha(h_{1211}))h_{122}\otimes \alpha^{-1}(S(h_{11})\alpha^{-1}(h_{22}))\\
&=&h_{[0][0]}\otimes h_{[0][1]} \otimes \alpha^{-1}(h_{[1]}),
\end{eqnarray*}
where in the third step we have used
$$h_{11}\otimes \alpha(h_{1211})\otimes h_{1212}\otimes \alpha^{-1}(h_{122})\otimes h_2=\alpha(h_{111})\otimes h_{112})\otimes\alpha^{-2}( h_{12})\otimes \alpha^{-2}(h_{21})\otimes\alpha( h_{22}),$$
which results from
\begin{eqnarray*} &&((id_H\otimes \tilde{a}_{H,H,H})\otimes id_H)\circ((id_H\otimes(\Delta\otimes id_H))\otimes id_H)\\
&&\circ((id_H\otimes \Delta)\otimes id_H)\circ(\Delta\otimes id_H)\circ\Delta\\
&=&(\tilde{a}_{H,H,H\otimes H}\otimes id_H)\circ((\Delta\otimes id_{H\otimes H})\otimes id_H)\circ(\tilde{a}_{H,H,H}\otimes id_H)\\
&& \circ\tilde{a}^{-1}_{H\otimes H,H,H}\circ(id_{H\otimes H}\otimes \Delta)\circ(\Delta\otimes id_H)\circ\Delta.
\end{eqnarray*}
Hom-unity condition: For any $h\in H$,
\begin{eqnarray*}h_{[0]}\varepsilon(h_{[1]})&=&\alpha(h_{12})\varepsilon(S(h_{11})\alpha^{-1}(h_{2}))=\alpha(h_{12})\varepsilon(h_{11})\varepsilon(h_{2})\\
&=&\alpha(\varepsilon(h_{11})h_{12})\varepsilon(h_{2})=h_1\varepsilon(h_{2})=\alpha^{-1}(h),
\end{eqnarray*}
and one can also easily show that $\widetilde{Ad}_R\circ \alpha=(\alpha\otimes\alpha )\circ\widetilde{Ad}_R$. Thus $\widetilde{Ad}_R$ is a right Hom-action of $(H,\alpha)$ onto itself.
\item In a similar manner, it can be proven that $\widetilde{Ad}_L$ is a left Hom-action of $(H,\alpha)$ onto itself.
\end{enumerate}
\end{proof}
With the next lemma we describe the right Hom-coaction $\phi_R$ on a left-invariant form $\omega_{\Gamma}(h)$ and the left Hom-coaction $\phi_L$ on a right-invariant form $\eta_{\Gamma}(h)$ by means of $\widetilde{Ad}_R$ and $\widetilde{Ad}_L$, respectively.

\begin{lemma}\label{coactions-on-left-right-inv-forms}For $h\in H$, we have the formulas
\begin{enumerate}
\item $\phi_R(\omega(h))=(\omega\otimes id)(\widetilde{Ad}_R(h)),$
\item $\phi_L(\eta(h))=(id\otimes \eta)(\widetilde{Ad}_L(h)).$
\end{enumerate}
\end{lemma}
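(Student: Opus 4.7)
Both identities are direct consequences of expanding the definitions of $\omega$, $\eta$, $\phi_L$, $\phi_R$, and the adjoint Hom-coactions, together with Hom-coassociativity of $\Delta$ and the anti-coalgebra property of $S$ (that is, $\Delta\circ S=\tau\circ(S\otimes S)\circ\Delta$). Since $\omega(h)\in {}^{coH}\Gamma$ and $\eta(h)\in\Gamma^{coH}$ (by Remark after \eqref{omega-for-Hom-FODC} and its right analogue), the outputs will automatically land in ${}^{coH}\Gamma\otimes H$ and $H\otimes\Gamma^{coH}$ respectively, so the statements really are saying that under $\omega:H\to {}^{coH}\Gamma$ (resp.\ $\eta:H\to\Gamma^{coH}$) the right Hom-coaction $\phi_R$ (resp.\ left Hom-coaction $\phi_L$) corresponds to $\widetilde{Ad}_R$ (resp.\ $\widetilde{Ad}_L$).

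\textbf{Part (1).} Start from $\omega(h)=S(h_1)\cdot dh_2$ and apply $\phi_R$ together with the defining formula \eqref{right-coaction-on-Hom-FODC}:
\[
\phi_R(\omega(h))=\phi_R(S(h_1)\cdot dh_2)=\Delta(S(h_1))\cdot (d\otimes id)(\Delta(h_2)).
\]
Using $\Delta(S(h_1))=S(h_{12})\otimes S(h_{11})$ and $(d\otimes id)(\Delta(h_2))=dh_{21}\otimes h_{22}$, the left $(H\otimes H)$-module structure on $\Gamma\otimes H$ (modulo the Hom-associator factors $\alpha^{\pm 1}$ coming from the Hom-multiplication) gives
\[
\phi_R(\omega(h))=S(h_{12})\cdot dh_{21}\;\otimes\; S(h_{11})h_{22}.
\]
Now apply iterated Hom-coassociativity ($h_{11}\otimes h_{12}\otimes\alpha^{-1}(h_2)=\alpha^{-1}(h_1)\otimes h_{21}\otimes h_{22}$, applied on the middle tensorand) to reindex from $(h_{11},h_{12},h_{21},h_{22})$ to $(h_{11},h_{121},h_{122},h_2)$ with appropriate $\alpha$-shifts. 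The first tensor factor becomes $S(\alpha(h_{121}))\cdot d(\alpha(h_{122}))=\gamma(\omega(h_{12}))=\omega(\alpha(h_{12}))$ (using $\omega\circ\alpha=\gamma\circ\omega$ and \eqref{omega-for-Hom-FODC}), while the second factor becomes $S(h_{11})\alpha^{-1}(h_2)$. This matches $(\omega\otimes id)(\widetilde{Ad}_R(h))$ by the first expression given for $\widetilde{Ad}_R(h)$ in the preceding lemma.

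\textbf{Part (2).} Since $h_1\cdot S(h_2)=\varepsilon(h)1$ and $d(1)=0$, the Leibniz rule yields $dh_1\cdot S(h_2)=-h_1\cdot dS(h_2)$, hence
\[
\eta(h)=-\,h_1\cdot dS(h_2).
\]
Apply $\phi_L$ via \eqref{left-coaction-on-Hom-FODC} with $g=h_1$ and $f=S(h_2)$; using $\Delta(S(h_2))=S(h_{22})\otimes S(h_{21})$ we obtain (up to $\alpha$-shifts)
\[
\phi_L(\eta(h))=-\,h_{11}S(h_{22})\;\otimes\; h_{12}\cdot dS(h_{21}).
\]
Exactly as in Part (1), iterated Hom-coassociativity reindexes the quadruple $(h_{11},h_{12},h_{21},h_{22})$ into the shape needed to recognise the second factor as $-\alpha(h_{121})\cdot dS(\alpha(h_{122}))=\eta(\alpha(h_{12}))$ (using $\eta(k)=-k_1\cdot dS(k_2)$ and $\eta\circ\alpha=\gamma\circ\eta$), while the first factor collapses to $\alpha(h_{11})S(\alpha^{-1}(h_2))$. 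This is precisely $(id\otimes \eta)(\widetilde{Ad}_L(h))$ by the first expression given for $\widetilde{Ad}_L(h)$.

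\textbf{Expected main obstacle.} The only nontrivial bookkeeping is tracking the $\alpha^{\pm 1}$ factors that arise simultaneously from (i) the Hom-multiplication in the bimodule actions on $\Gamma\otimes H$ and $H\otimes\Gamma$, (ii) each application of Hom-coassociativity $(\Delta\otimes\alpha^{-1})\Delta=(\alpha^{-1}\otimes\Delta)\Delta$, and (iii) the Hom-compatibility $\Delta\circ\alpha=(\alpha\otimes\alpha)\circ\Delta$ combined with $S\circ\alpha=\alpha\circ S$. A clean way to organise this is to first verify both sides lie in the same $\alpha$-degree, then reduce to the classical (non-Hom) identity, and finally restore the $\alpha$-shifts by using $\omega\circ\alpha=\gamma\circ\omega$ and $\eta\circ\alpha=\gamma\circ\eta$ to absorb any residual $\alpha$'s on the invariant-form factor.
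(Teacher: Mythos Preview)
Your proposal is correct and follows essentially the same route as the paper. Part (1) is virtually identical to the paper's computation; for Part (2) the paper only writes ``Similarly, one can show\ldots'', and your device of rewriting $\eta(h)=-h_1\cdot dS(h_2)$ so that \eqref{left-coaction-on-Hom-FODC} applies directly is a perfectly valid way to carry out that parallel computation (the straight dual would instead use the right $(H,\alpha)$-linearity of $\phi_L$ on $\eta(h)=dh_1\cdot S(h_2)$, which amounts to the same reindexing).
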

\begin{proof}
\begin{enumerate}
\item For $h\in H$,
\begin{eqnarray*}\phi_R(\omega(h))&=&\Delta(S(h_1))(d\otimes id)(\Delta(h_2))\\
&=&(S(h_{12})\otimes S(h_{11}))(dh_{21}\otimes h_{22})=S(h_{12})\cdot dh_{21}\otimes S(h_{11})h_{22}\\
&=&S(\alpha(h_{121}))\cdot d(\alpha(h_{122}))\otimes S(h_{11})\alpha^{-1}(h_{2})\\
&=&\omega(\alpha(h_{12}))\otimes S(h_{11})\alpha^{-1}(h_2)=(\omega\otimes id)(\alpha(h_{12})\otimes S(h_{11})\alpha^{-1}(h_{2}))\\
&=&(\omega\otimes id)(\widetilde{Ad}_R(h)).
\end{eqnarray*}
\item Similarly, one can show that the equality $\phi_L(\eta(h))=(id\otimes \eta)(\widetilde{Ad}_L(h))$ holds.
\end{enumerate}
\end{proof}

\begin{proposition}Suppose that $(\Gamma,\gamma)$ is a left-covariant $(H,\alpha)$-Hom-FODC with associated right Hom-ideal $\mathcal{R}_{\Gamma}$. Then $(\Gamma,\gamma)$ is a bicovariant $(H,\alpha)$-Hom-FODC if and only if $\widetilde{Ad}_R(\mathcal{R})\subseteq \mathcal{R}\otimes H$, that is, $\mathcal{R}$ is $\widetilde{Ad}_R$-invariant.
\end{proposition}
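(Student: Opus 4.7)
The plan is to combine Lemma~\ref{coactions-on-left-right-inv-forms}(1), which identifies the right Hom-coaction on left-invariant forms with $\widetilde{Ad}_R$, with the identification $\Gamma\cong \Omega^1(H)/\mathcal{N}$ from Proposition~\ref{one-one-corresp-Hom-ideals}, where $\mathcal{N}=H\cdot\omega_{\Omega^1(H)}(\mathcal{R})$.

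For the forward direction, I would assume $(\Gamma,\gamma)$ is bicovariant with right Hom-coaction $\phi_R$. For $h\in\mathcal{R}$ one has $\omega_{\Gamma}(h)=0$, so Lemma~\ref{coactions-on-left-right-inv-forms}(1) gives
\[
(\omega_{\Gamma}\otimes \mathrm{id})(\widetilde{Ad}_R(h))=\phi_R(\omega_{\Gamma}(h))=0.
\]
Next I would observe that $\ker\omega_{\Gamma}=\mathcal{R}\oplus k\cdot 1$ inside $H$: indeed $\omega_\Gamma(1)=P_L(d1)=0$, and on $\ker\varepsilon$ the kernel of $\omega_\Gamma$ is exactly $\mathcal{R}$ by definition. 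This places $\widetilde{Ad}_R(h)\in (\mathcal{R}\oplus k\cdot 1)\otimes H$. To excise the $1\otimes H$ piece, I would apply $\varepsilon\otimes \mathrm{id}$: a short Hom-Hopf computation using counity and the antipode axiom gives $(\varepsilon\otimes\mathrm{id})\widetilde{Ad}_R(h)=\varepsilon(h)\cdot 1 = 0$ for $h\in\ker\varepsilon$, which forces the $1\otimes H$ component to vanish and leaves $\widetilde{Ad}_R(h)\in\mathcal{R}\otimes H$.

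For the backward direction, assuming $\widetilde{Ad}_R(\mathcal{R})\subseteq \mathcal{R}\otimes H$, I would first note that the universal calculus $(\Omega^1(H),\beta)$ is itself bicovariant: the right Hom-coaction $\phi^{u}_R\colon \Omega^1(H)\to\Omega^1(H)\otimes H$ given by $\phi^{u}_R(h\cdot dg)=\Delta(h)(d\otimes\mathrm{id})\Delta(g)$ is verified dually to the left-covariant construction in the preceding example. The crux is then to show that $\phi^{u}_R(\mathcal{N})\subseteq\mathcal{N}\otimes H$; once this is established, $\phi^{u}_R$ descends to $\bar\phi_R\colon\Gamma\to\Gamma\otimes H$ satisfying \ref{right-coaction-on-Hom-FODC}, so $\Gamma$ becomes bicovariant by Definition~\ref{righ-covariant-Hom-FODC1}. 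To prove the stability, I would handle the generating case first: for $r\in\mathcal{R}$,
\[
\phi^{u}_R(\omega_{\Omega^1(H)}(r)) = (\omega_{\Omega^1(H)}\otimes\mathrm{id})(\widetilde{Ad}_R(r))\in\omega_{\Omega^1(H)}(\mathcal{R})\otimes H\subseteq\mathcal{N}\otimes H
\]
by Lemma~\ref{coactions-on-left-right-inv-forms}(1) and the invariance hypothesis. For a general $h\cdot\omega_{\Omega^1(H)}(r)\in\mathcal{N}$, I would exploit the left-module compatibility of $\phi^{u}_R$ (part of right-covariance on $\Omega^1(H)$) to factor $\phi^{u}_R(h\cdot\omega_{\Omega^1(H)}(r))$ as a Hom-product of $\Delta(h)$ with $\phi^{u}_R(\omega_{\Omega^1(H)}(r))$; since $\mathcal{N}$ is a Hom-subbimodule of $\Omega^1(H)$, closure under the $H$-action keeps the result in $\mathcal{N}\otimes H$.

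The main obstacle I anticipate is the second half of the backward direction: propagating the stability from $\omega_{\Omega^1(H)}(\mathcal{R})$ to all of $\mathcal{N}$, which requires careful tracking of the associator twists by $\alpha$ and $\beta$ inherent in the Hom-bimodule compatibilities of $\phi^{u}_R$. The forward direction is by contrast a clean linear-algebraic extraction once $\ker\omega_\Gamma$ is identified, and the verification that the induced $\bar\phi_R$ satisfies the right-coaction axioms is then routine, since these axioms are inherited from those of $\phi^{u}_R$ on $\Omega^1(H)$.
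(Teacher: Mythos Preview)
Your proposal is correct and follows essentially the same route as the paper: both directions hinge on Lemma~\ref{coactions-on-left-right-inv-forms}(1), the bicovariance of the universal Hom-FODC $\Omega^1(H)$, and the identification $\Gamma\cong\Omega^1(H)/\mathcal{N}$ from Proposition~\ref{one-one-corresp-Hom-ideals}. Your forward direction is in fact more explicit than the paper's, which simply asserts that $(\omega_\Gamma\otimes\mathrm{id})\widetilde{Ad}_R(h)=0$ ``implies'' $\widetilde{Ad}_R(h)\in\mathcal{R}\otimes H$; your identification $\ker\omega_\Gamma=k\cdot 1\oplus\mathcal{R}$ together with the $(\varepsilon\otimes\mathrm{id})$ trick cleanly supplies the missing step.
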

\begin{proof} If $(\Gamma,\gamma)$ is a bicovariant Hom-FODC, then the equation obtained in Lemma (\ref{coactions-on-left-right-inv-forms}) holds. It implies that $\widetilde{Ad}_R(\mathcal{R})\subseteq \mathcal{R}\otimes H$ since $\mathcal{R}=\left(h\in ker\varepsilon|\:\omega(h)=0\right)$. On the contrary, suppose that $\widetilde{Ad}_R(\mathcal{R})\subseteq \mathcal{R}\otimes H$. We know that the universal Hom-FODC $\Omega^1(H)$ is bicovariant. So, by applying Lemma (\ref{coactions-on-left-right-inv-forms}) to the bicovariant Hom-FODC $\Omega^1(H)$ and using the $Ad_R$-invariance of $\mathcal{R}$, we conclude that theright Hom-action of $\Omega^1(H)$ passes to the quotient $\Omega^1(H)/\mathcal{N}$, where $\mathcal{N}:=H\omega_{\Omega^1(H)}(\mathcal(R))$, which is right-covariant. Hence, from Proposition (\ref{one-one-corresp-Hom-ideals}), $(\Gamma,\gamma)$ is right-covariant as well.
 \end{proof}
\subsection{Quantum Monoidal Hom-Lie Algebra}

Let $(\Gamma,\gamma)$ be a bicovariant $(H,\alpha)$-Hom-FODC with associated right Hom-ideal $\mathcal{R}$ and finite dimensional quantum Hom-tangent space $(\mathcal{T},\tau)$, where $\tau=\bar{\alpha}|_{\mathcal{T}}$.

We define a linear mapping $[-,-]:\mathcal{T}\otimes \mathcal{T}\to \mathcal{T}$ by setting, for $X,Y \in\mathcal{T}$,
\begin{equation}\label{quantum-Hom-Lie-bracket}[X,Y](h)=(X\otimes Y)(\widetilde{Ad}_R(h)),\: \forall h\in H.\end{equation}

$[X,Y]\in \mathcal{T}$: Indeed, since $\widetilde{Ad}_R(\mathcal{R})\subseteq \mathcal{R}\otimes H$ by the previous proposition and any element of $\mathcal{T}$ annihilates $\mathcal{R}$ by the definition of quantum Hom-tangent space, $(X\otimes Y)(\widetilde{Ad}_R(h))=0$ for all $h\in \mathcal{R}$, i.e., $[X,Y](h)=0,\:\forall h\in \mathcal{R}$. We also obtain $[X,Y](1)=0$ since $X(1)=0=Y(1)$. Thus $[X,Y]\in \mathcal{T}$. Besides, we have
\begin{eqnarray*}[\tau(X),\tau(Y)](h)&=&(X\circ\alpha^{-1}\otimes Y\circ\alpha^{-1})(\widetilde{Ad}_R(h))\\
&=&X(h_{12})Y(S(\alpha^{-1}(h_{11}))\alpha^{-2}(h_2))\\
&=&(X\otimes Y)(\widetilde{Ad}_R(\alpha^{-1}(h)))=[X,Y](\alpha^{-1}(h))\\
&=&\tau([X,Y])(h),
\end{eqnarray*}
for any $h\in H$, which means $[-,-]:\mathcal{T}\otimes \mathcal{T}\to \mathcal{T}$ is a morphism in $\widetilde{\mathcal{H}}(\mathcal{M}_k)$.

We now fix some notation. Suppose that $<\cdot,\cdot>:\mathcal{T}\times \:^{coH}\Gamma \to k$ is the bilinear form in the Proposition \ref{bilinear-form}. There exists a unique bilinear form $<\cdot,\cdot>_2\: :(\mathcal{T}\otimes \mathcal{T})\times \:^{coH}(\Gamma\otimes_H \Gamma) \to k$ defined by

\begin{equation}<X\otimes Y,u\otimes v>_2\: =<X,u><Y,v>\end{equation}

for $X,Y\in \mathcal{T}$ and $u,v\in \: ^{coH}\Gamma$, which is nondegenerate as the bilinear form $<\cdot,\cdot>$ is. If we put $B: \Gamma\otimes_H \Gamma\to\Gamma\otimes_H \Gamma $ for the Woronowicz' braiding, then, for $h,g\in H$, we compute

 \begin{eqnarray}\label{action-of-Woronowicz-braiding}
 \nonumber B(\omega(h)\otimes_H\omega(g))&=&\gamma(\omega(\alpha(g_{12})))\otimes_H\gamma^{-1}(\omega(h))\lhd(S(g_{11})\alpha^{-1}(g_2))\\
 \nonumber &=&\omega(\alpha^{2}(g_{12}))\otimes_H \gamma^{-1}(\omega(h)\lhd (S(\alpha(g_{11}))g_2))\\
 \nonumber &=&\omega(\alpha^{2}(g_{12}))\otimes_H \gamma^{-1}(\omega(\bar{h}(S(\alpha(g_{11}))g_2)))\\
 &=&\omega(\alpha^{2}(g_{12}))\otimes_H \omega(\overline{\alpha^{-1}(h)}(S(g_{11})\alpha^{-1}(g_2))).
 \end{eqnarray}
 With respect to the nondegenerate bilinear form $<\cdot,\cdot>_2$, we define the transpose $B^{t}$ of $B$ as a linear map acting on $\mathcal{T}\otimes \mathcal{T}$ such that
$$<B^{t}(X\otimes Y),u\otimes v>_2\: =<X\otimes Y,B(u\otimes v)>_2.$$

We now recall that the dual  monoidal Hom-Hopf algebra $(H^{\circ},\alpha^{\circ})$ of $(H,\alpha)$ consists of functionals $f\in H'$ for which $\Delta_{H^{\circ}}(f)=f_1\otimes f_2 \in H'\otimes H'$ and the Hom-counit is given by $\varepsilon_{H^{\circ}}(f)=f(1_H)$. Since, also $\Delta(f)(h\otimes g):=f(hg)$ for $\Delta(f)\in (H\otimes H)' $ and $h,g \in H$, we have $f(hg)=f_1(h)f_2(g)$. $\alpha^{\circ}$ is given by $\alpha^{\circ}(f)=f\circ \alpha^{-1}$ for $f\in H^{\circ}$. Hom-multiplication $m_{H^{\circ}}$ is the convolution, i.e., $m_{H^{\circ}}(f\otimes f')(h)=(ff')(h)=f(h_1)f'(h_2)$ for $f,f'\in H'$, $h\in H$ and the Hom-unit is $\varepsilon_H$. The antipode is given by $S(f)(h)=f(S(h))$ for $f\in H^{\circ}$ and $h\in H$. Since we assumed that $\mathcal{T}_{\Gamma}$ is finite dimensional, $\mathcal{T}_{\Gamma}$ is contained in $H^{\circ}$. Thus we have the following theorem in $(H^{\circ},\alpha^{\circ})$.

\begin{theorem}For any $X,Y,Z \in \mathcal{T}_{\Gamma}$ we have
\begin{enumerate}
\item $[X,Y]=\widetilde{ad}_R(Y)(X)=XY-m_{H^{\circ}}(B^{t}(X\otimes Y))$.
\item Let $\chi=\sum_iX_i\otimes Y_i$ for $X_i,Y_i\in \mathcal{T}$ such that $B^{t}(\chi)=\chi$, then $\sum_i[X_i,Y_i]=0$.
\item $[\tau(X),[Y,\tau^{-1}(Z)]]=[[X,Y],Z]-\sum_i[[X,\tau^{-1}(Z_i)],\tau(Y_i)]$, where $Y_i,Z_i\in \mathcal{T}$ such that $B^{t}(Y\otimes Z)=\sum_iZ_i\otimes Y_i$.
\end{enumerate}
\end{theorem}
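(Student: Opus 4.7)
The plan is to prove the three assertions using the defining formula $[X,Y](h)=(X\otimes Y)(\widetilde{Ad}_R(h))$, the explicit action of Woronowicz' braiding on left-invariant forms computed in (\ref{action-of-Woronowicz-braiding}), and the nondegeneracy of the pairing $<\cdot,\cdot>_2$. Throughout, the computations happen in $(H^{\circ},\alpha^{\circ})$, where both $XY$ (convolution) and $m_{H^{\circ}}\circ B^{t}$ make sense as binary operations on $\mathcal{T}$.

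For part (1), I would begin by evaluating both candidate expressions on a test element $h\in H$. The convolution gives $(XY)(h)=X(h_1)Y(h_2)$. To obtain $m_{H^{\circ}}(B^{t}(X\otimes Y))(h)$ explicitly, I would dualize the formula $B(\omega(h)\otimes_H\omega(g))=\omega(\alpha^{2}(g_{12}))\otimes_H\omega(\overline{\alpha^{-1}(h)}(S(g_{11})\alpha^{-1}(g_2)))$ from (\ref{action-of-Woronowicz-braiding}) against $<\cdot,\cdot>_2$, using $<X,\omega(h)>=X(\alpha^{-1}(h))$. This reads off $B^{t}(X\otimes Y)$ as a concrete element of $\mathcal{T}\otimes \mathcal{T}$, whose convolution evaluated on $h$ contributes exactly the $\varepsilon(h_1)$-portion of $X(h_1)Y(h_2)$. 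Subtracting, $(XY)(h)-m_{H^{\circ}}(B^{t}(X\otimes Y))(h)$ collapses to $X(\alpha(h_{12}))Y(S(h_{11})\alpha^{-1}(h_2))=(X\otimes Y)(\widetilde{Ad}_R(h))=[X,Y](h)$. The identification $[X,Y]=\widetilde{ad}_R(Y)(X)$ follows from dualizing the right adjoint Hom-action on $\mathcal{T}$, since the right adjoint coaction $\widetilde{Ad}_R$ on $H$ is by construction the transpose of $\widetilde{ad}_R$ on functionals.

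Part (2) is then immediate from (1): applying $m_{H^{\circ}}$ and $m_{H^{\circ}}\circ B^{t}$ to $\chi=\sum_iX_i\otimes Y_i$ and using $B^{t}(\chi)=\chi$ gives
\[
\sum_i[X_i,Y_i]=m_{H^{\circ}}(\chi)-m_{H^{\circ}}(B^{t}(\chi))=m_{H^{\circ}}(\chi-\chi)=0.
\]

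For part (3), the plan is to invoke the braid equation for $B$ on $\Gamma\otimes_H\Gamma\otimes_H\Gamma$, which holds because $B$ is the braiding of the Yetter--Drinfeld-like category of bicovariant Hom-bimodules: $(B\otimes_H\mathrm{id})(\mathrm{id}\otimes_H B)(B\otimes_H \mathrm{id})=(\mathrm{id}\otimes_H B)(B\otimes_H \mathrm{id})(\mathrm{id}\otimes_H B)$, modulo the Hom-associator $\tilde{a}$. Dualizing via the triple extension of $<\cdot,\cdot>_2$ yields the corresponding relation for $B^{t}$ on $\mathcal{T}\otimes\mathcal{T}\otimes\mathcal{T}$, where the $\tilde{a}$-twists translate into $\tau^{\pm 1}$ factors. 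I would then expand both sides of the claimed identity by applying (1) twice, writing each $[-,-]$ as $m_{H^{\circ}}-m_{H^{\circ}}\circ B^{t}$ and regrouping by Hom-associativity of convolution in $(H^{\circ},\alpha^{\circ})$. Matching terms, the difference between the two sides of the Hom-Jacobi identity equals the image under $m_{H^{\circ}}\circ m_{H^{\circ}}$ of the braid-relation identity on $X\otimes Y\otimes Z$, hence vanishes. The main obstacle will be the bookkeeping of the $\tau$ and $\tau^{-1}$ twists: they are not cosmetic but arise genuinely from the non-strict Hom-associator, and aligning the Sweedler indices of $\widetilde{Ad}_R$ applied iteratively with the components of $B^{t}(Y\otimes Z)=\sum_iZ_i\otimes Y_i$ is where the care must go. Antisymmetry (2), applied to the $B^{t}$-invariant residual difference, supplies the final cancellation.
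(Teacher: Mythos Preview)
Your treatment of parts (1) and (2) follows the paper's argument closely: compute $B^{t}(X\otimes Y)$ by pairing against $B(\omega(h)\otimes_H\omega(g))$ via (\ref{action-of-Woronowicz-braiding}) and (\ref{bil-form-wrt-omega}), then compare with the convolution $XY$ and with the direct evaluation of $\widetilde{ad}_R(Y)(X)$ on $h$. Part (2) is immediate from (1), exactly as you say.

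For part (3), however, your route diverges from the paper's, and your sketch has a gap. The paper does \emph{not} invoke the braid equation at all. Instead it uses only (1) together with the Hom-module axiom for the right adjoint action,
\[
(X\lhd Y)\lhd Z=\tau(X)\lhd\bigl(Y\,\tau^{-1}(Z)\bigr),
\]
so that $[[X,Y],Z]=[\tau(X),\,Y\tau^{-1}(Z)]$. Then (1) in the form $Y\tau^{-1}(Z)=[Y,\tau^{-1}(Z)]+\sum_i\tau^{-1}(Z_i)Y_i$ is substituted, and one more application of the same Hom-module identity yields the claim in three lines. This is purely algebraic in $H^{\circ}$ and requires no structure on $B$ beyond part (1).

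Your proposed approach---expand every bracket as $m_{H^{\circ}}-m_{H^{\circ}}\circ B^{t}$ and reduce the residual to the braid relation---cannot close using only the Yang--Baxter equation and Hom-associativity of $m_{H^{\circ}}$. When you expand $[[X,Y],Z]$ you meet terms like $m_{H^{\circ}}\bigl(B^{t}((XY)\otimes Z)\bigr)$, and simplifying these requires the hexagon-type compatibility of $B^{t}$ with multiplication (dual to the naturality of $B$ with respect to the Hom-module structure), which you do not invoke. Without that ingredient the ``residual'' is not a pure braid-relation expression, and part (2) cannot be applied because you have not shown the residual is $B^{t}$-invariant. The paper's module-axiom argument sidesteps all of this.
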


\begin{proof}
\begin{enumerate}
\item For $h\in H$,
\begin{eqnarray*}\widetilde{ad}_R(Y)(X)(h)&=&((S(Y_1)\tau^{-1}(X))\tau(Y_2))(h)\\
&=&(S(Y_1)\tau^{-1}(X))(h_1)\tau(Y_2))(h_2)=S(Y_1)(h_{11})\tau^{-1}(X)(h_{12})\tau(Y_2))(h_2)\\
&=&Y_1(S(h_{11}))X(\alpha(h_{12}))Y_2(\alpha^{-1}(h_2))=X(\alpha(h_{12}))Y_1(S(h_{11}))Y_2(\alpha^{-1}(h_2))\\
&=&X(\alpha(h_{12}))Y(S(h_{11})\alpha^{-1}(h_2))=(X\otimes Y)(\alpha(h_{12})\otimes S(h_{11})\alpha^{-1}(h_2))\\
&=&(X\otimes Y)(\widetilde{Ad}_R(h))=[X,Y](h),
\end{eqnarray*}
which gives us the first equality. If we set the finite sum for $B^{t}(X\otimes Y)=\sum_iY_i\otimes X_i$ with $X_i,Y_i\in \mathcal{T}$, then, for any $h,g\in H$,
\begin{eqnarray*} B^{t}(X\otimes Y)(h\otimes g)&=&\sum_iY_i(h)X_i(g)=\sum_i<\tau^{-1}(Y_i),\omega(h)><\tau^{-1}(X_i),\omega(g)>\\
&=&<\sum_i\tau^{-1}(Y_i)\otimes \tau^{-1}(X_i),\omega(h)\otimes_H \omega(g)>_2\\
&=&<B^{t}(\tau^{-1}(X)\otimes \tau^{-1}(Y)),\omega(h)\otimes_H \omega(g)>_2\\
&=&<\tau^{-1}(X)\otimes \tau^{-1}(Y),B(\omega(h)\otimes_H \omega(g))>_2\\
&=&<\tau^{-1}(X)\otimes\tau^{-1}(Y),\omega(\alpha^{2}(g_{12}))\otimes_H\omega(\overline{\alpha^{-1}(h)}(S(g_{11})\alpha^{-1}(g_2)))>_2\\
&=&<\tau^{-1}(X),\omega(\alpha^{2}(g_{12}))><\tau^{-1}(Y),\omega(\overline{\alpha^{-1}(h)}(S(g_{11})\alpha^{-1}(g_2)))>\\
&=&X(\alpha^{2}(g_{12}))Y(\overline{\alpha^{-1}(h)}(S(g_{11})\alpha^{-1}(g_2)))\\
&=&X(\alpha^{2}(g_{12}))Y_1(\overline{\alpha^{-1}(h)})Y_2(S(g_{11})\alpha^{-1}(g_2))\\
&=&Y_1(\overline{\alpha^{-1}(h)})X(\alpha^{2}(g_{12}))Y_{21}(S(g_{11}))Y_{22}(\alpha^{-1}(g_2))\\
&=&\overline{\tau(Y_1)}(h)S(Y_{21})(g_{11})X(\alpha^{2}(g_{12}))Y_{22}(\alpha^{-1}(g_2))\\
&=&\overline{\tau(Y_1)}(h)(S(Y_{21})\tau^{-2}(X))(g_1)\tau(Y_{22})(g_2)\\
&=&\overline{\tau(Y_1)}(h)[(S(Y_{21})\tau^{-2}(X))\tau(Y_{22})](g)\\
&=&(\overline{\tau(Y_1)}\otimes\widetilde{ad}_R(Y_2)(\tau^{-1}(X)))(h\otimes g),
\end{eqnarray*}
where in the sixth equality we have used the equation \ref{action-of-Woronowicz-braiding}. So, we have $B^{t}(X\otimes Y)=\overline{\tau(Y_1)}\otimes\widetilde{ad}_R(Y_2)(\tau^{-1}(X))$. Hence, we make the following computation

\begin{eqnarray*}m_{H'}(B^{t}(X\otimes Y))&=&\overline{\tau(Y_1)}\widetilde{ad}_R(Y_2)(\tau^{-1}(X))\\
&=&(\tau(Y_1)-\varepsilon_{H^{\circ}}(\tau(Y_1))1_{H^{\circ}})\widetilde{ad}_R(Y_2)(\tau^{-1}(X))\\
&=&\tau(Y_1)[(S(Y_{21})\tau^{-2}(X))\tau(Y_{22})]-(\varepsilon_{H^{\circ}}(Y_1)1_{H^{\circ}})[(S(Y_{21})\tau^{-2}(X))\tau(Y_{22})]\\
&=&\tau(Y_1)[S(\tau(Y_{21}))(\tau^{-2}(X)Y_{22})]-(\varepsilon_{H^{\circ}}(Y_1)1_{H^{\circ}})((S(Y_{21})\tau^{-2}(X))\tau(Y_{22}))\\
&=&(Y_1S(\tau(Y_{21})))\tau(\tau^{-2}(X)Y_{22})-(\varepsilon_{H^{\circ}}(Y_1)1_{H^{\circ}})((S(Y_{21})\tau^{-2}(X))\tau(Y_{22}))\\
&=&(\tau(Y_{11})S(\tau(Y_{12})))(\tau^{-1}(X)Y_2)-1_{H^{\circ}}([S(\varepsilon_{H^{\circ}}(Y_{11})Y_{12})\tau^{-2}(X)]Y_2)\\
&=&(\varepsilon_{H^{\circ}}(Y_1)1_{H^{\circ}})(\tau^{-1}(X)Y_2)-(S(Y_1)\tau^{-1}(X))\tau(Y_2)\\
&=&1_{H^{\circ}}(\tau^{-1}(XY))-\widetilde{ad}_R(Y)(X)\\
&=&XY-\widetilde{ad}_R(Y)(X),
\end{eqnarray*}
that is, we get $\widetilde{ad}_R(Y)(X)=XY-m_{H'}(B^{t}(X\otimes Y))$.

\item  It immediately follows from (1) that $\sum_i[X_i,Y_i]=\sum_iX_iY_i-m_{H'}(B^{t}(\chi))=\sum_iX_iY_i-\sum_iX_iY_i=0$.

\item Let us first set $[X,Y]=\widetilde{ad}_R(Y)(X)=X\lhd Y$. Then,
$$[[X,Y],Z]=[X,Y]\lhd Z=(X\lhd Y)\lhd Z=\tau(X)\lhd (Y \tau^{-1}(Z))=[\tau(X),Y\tau^{-1}(Z)].$$
Since, by (1), $YZ=[Y,Z]+\sum_iZ_iY_i$ for $B^{t}(Y\otimes Z)=\sum_iZ_i\otimes Y_i$, we have

\begin{eqnarray*}[[X,Y],Z]&=&[\tau(X),Y\tau^{-1}(Z)]=[\tau(X),[Y,\tau^{-1}(Z)]+\sum_i\tau^{-1}(Z_i)Y_i]\\
&=&[\tau(X),[Y,\tau^{-1}(Z)]]+\sum_i[\tau(X),\tau^{-1}(Z_i)Y_i]\\
&=&[\tau(X),[Y,\tau^{-1}(Z)]]+\sum_i[[X,\tau^{-1}(Z_i)], \tau(Y_i)],
\end{eqnarray*}
that is, $[\tau(X),[Y,\tau^{-1}(Z)]]=[[X,Y],Z]-\sum_i[[X,\tau^{-1}(Z_i)], \tau(Y_i)]$ holds.
\end{enumerate}
\end{proof}

\begin{remark}If we take the braiding $B$ as the flip operator, then $B^{t}$ is the flip on $\mathcal{T}\otimes \mathcal{T}$ by its definition. In this case,
we obtain
$$[X,Y]=XY-YX, \qquad [X,Y]+[Y,X]=0, \forall X,Y\in \mathcal{T}$$
and
\begin{eqnarray*}[\tau(X),[Y,\tau^{-1}(Z)]]&=&[[X,Y],Z]-[[X,\tau^{-1}(Z)],\tau(Y)]=-[Z,[X,Y]]+[\tau(Y),[X,\tau^{-1}(Z)]]\\
&=&-[Z,[X,Y]]-[\tau(Y),[\tau^{-1}(Z),X]].
\end{eqnarray*}
Then, by replacing $Z$ with $\tau(Z)$ in the above equality, we get
$$[\tau(X),[Y,Z]]+[\tau(Y),[Z,X]]+[\tau(Z),[X,Y]]=0,$$
which is the Hom-Jacobi identity. In the above theorem, items $(2)$ and $(3)$ are the quantum versions of the antisymmetry relation and the Hom-Jacobi identity. Therefore, $(\mathcal{T}_{\Gamma},\tau)$ is called the {\it quantum Hom-Lie algebra} of the bicovariant $(H,\alpha)$-Hom-FODC.

\end{remark}

\section{Acknowledgments}
The author would like to thank Professor Christian Lomp for his valuable suggestions. This research was funded by the European Regional Development Fund through the programme COMPETE and by the Portuguese Government through the FCT- Fundação para a Ciência e a Tecnologia under the project PEst-C/MAT/UI0144/2013. The author was supported by the grant SFRH/BD/51171/2010.

\end{document}